\documentclass[12pt]{article} 
\usepackage[sectionbib]{natbib}
\usepackage{array,epsfig,fancyheadings,rotating}
\usepackage[]{hyperref}  
\usepackage{sectsty, secdot}
\sectionfont{\fontsize{15}{14pt plus.8pt minus .6pt}\selectfont}
\renewcommand{\theequation}{\thesection\arabic{equation}}
\subsectionfont{\fontsize{12}{14pt plus.8pt minus .6pt}\selectfont}

\textwidth=36pc
\textheight=46.5pc
\oddsidemargin=1pc
\evensidemargin=1pc
\headsep=15pt
\topmargin=.6cm
\parindent=1.7pc
\parskip=0pt

\usepackage{amsmath}
\usepackage{amssymb}
\usepackage{amsfonts}
\usepackage{multirow}
\usepackage{amsthm}
\usepackage{xcolor}
\usepackage{dsfont,bm}
\usepackage{enumitem}
\usepackage{xr}

\setcounter{page}{1}
\newtheorem{theorem}{Theorem}
\newtheorem{lemma}{Lemma}

\newtheorem{proposition}{Proposition}
\newtheorem{condition}{Condition}
\theoremstyle{definition}

\pagestyle{fancy}

\newcommand{\indep}{\raisebox{0.05em}{\rotatebox[origin=c]{90}{$\models$}}}

\pagestyle{fancy}

\lhead[\fancyplain{} \leftmark]{}
\chead[]{}
\rhead[]{\fancyplain{}\rightmark}
\cfoot{}


\begin{document}
	
	
	\renewcommand{\baselinestretch}{1.5}

	\renewcommand{\thefootnote}{}

	
	\fontsize{12}{14pt plus.8pt minus .6pt}\selectfont \vspace{0.8pc}
	\centerline{\large\bf On the Optimality of Nuclear-norm-based}
	\vspace{2pt}
	\centerline{\large\bf Matrix Completion for Problems with Smooth}
	\vspace{2pt} 
	\centerline{\large\bf Non-linear Structure}
	\vspace{.4cm} 
	\centerline{Yunhua Xiang, Tianyu, Zhang, Xu Wang, Ali Shojaie, Noah Simon} 
	\vspace{.4cm} 

	\vspace{.55cm} \fontsize{9}{11.5pt plus.8pt minus.6pt}\selectfont
	
	
	\begin{quotation}
		\noindent {\it Abstract:}
		Originally developed for imputing missing entries in low rank, or approximately low rank matrices, matrix completion has proven widely effective in many problems where there is no reason to assume low-dimensional linear structure in the underlying matrix, as would be imposed by rank constraints. In this manuscript, we build some theoretical intuition for this behavior. We consider matrices which are not necessarily low-rank, but lie in a low-dimensional non-linear manifold. We show that nuclear-norm penalization is still effective for recovering these matrices when observations are missing completely at random. In particular, we give upper bounds on the rate of convergence as a function of the number of rows, columns, and observed entries in the matrix, as well as the smoothness and dimension of the non-linear embedding. We additionally give a minimax lower bound: This lower bound agrees with our upper bound (up to a logarithmic factor), which shows that nuclear-norm penalization is (up to log terms) minimax rate optimal for these problems.

		\vspace{9pt}
		\noindent {\it Key words and phrases:}
		Matrix completion, Nonlinear low-rank structure, Nuclear-norm penalization.
		\par
	\end{quotation}\par

	\def\thefigure{\arabic{figure}}
	\def\thetable{\arabic{table}}
	
	\renewcommand{\theequation}{\thesection.\arabic{equation}}

	\fontsize{12}{14pt plus.8pt minus .6pt}\selectfont

	\section{Introduction}\label{sec:intro}
	
	Matrix completion is a framework that has gained popularity in a wide range of machine learning applications, including recommender systems \citep{Koren2009}, system identification \citep{Liu2010}, global positioning \citep{singer2010uniqueness} and natural language processing \citep{Wijaya2017}.
	It is a useful framework for complex prediction problems, where each observation comes with a heterogeneous collection of observed features.	In particular, matrix completion is applied to problems where the object of inference or prediction is a matrix whose rows correspond to observation and columns to variables/features. In many applications, only a subset of entries in this matrix are observed (often with noise), and the goal is to ``complete'' the matrix, filling in estimates of the unobserved entries. This ``completion'' is done by leveraging the known structure in the matrix. The most famous example, which brought matrix completion to prominence, is the Netflix Challenge \citep{Koren2009}, where a small sample of observed ratings for each customer was used to successfully predict future/unobserved movie ratings for Netflix customers.

	More formally, suppose we have an underlying unobserved matrix $M\in\mathbb{R}^{n\times p}$: We then observe a subset of the entries from the noise-contaminated matrix $Y= M +  E$, where $E$ is a matrix of i.i.d. mean zero, finite variance noise variables. Our goal is to recover matrix $M$ from this partially observed, noisy $Y$. This is known as matrix completion. Without any structure on the matrix $M$, recovering the values of $M$ corresponding to unobserved entries is impossible \citep{Laurent2001}. Matrix completion becomes possible if one imposes some constraints on the structure of the underlying matrix: It is most common to assume that $M$ is low rank.
	Directly employing this assumption by e.g., finding the minimum rank completion of $Y$ (or corresponding rank-constrained regression) is unfortunately NP-hard and becomes computationally infeasible for problems involving large matrices \citep{CandesTao2010, Chistov1984ComplexityOQ}.
	Over the last decades, computationally efficient methods using convex optimization have been developed for recovering a low rank matrix from a small number of observations with near-optimal statistical guarantees in primarily noiseless problems \citep{Srebro2004, Recht2011, CandesTao2010, Recht2010}, and when the observed entries are contaminated with noise \citep{CandesPlan2009, koltchinskii2011}. These methods rely on using the nuclear norm of the matrix \citep{fazel2002, jaggi2010simple}, i.e., sum of its singular values, as a convex surrogate for the matrix rank. The low-rank structure leveraged in matrix completion can be thought of as learning a linear embedding of the data in a low-dimensional space.


    In practice, the underlying matrix $M$ may not be low rank. However, we often believe it may still have useful low-dimensional structure. It has thus become popular to learn a low-dimensional non-linear embedding of the data. This idea is used both in matrix completion and more generally for low-dimensional summaries of data. It has been applied in motion recovery \citep{Xia2018}, epigenomics \citep{Schreiber2018}, and health data analytics \citep{wang2015} among other areas. To recover these embeddings, Reproducing Kernel Hilbert Space (RKHS) methods \citep{Fan2018a}, nearest neighbor methods \citep{li2019nearest}, and deep learning methods like autoencoders and neural-network-based variational frameworks \citep{Fan2018b, yu2013embedding, jiang2016variational} have been used.

	Additionally, there has been strong empirical evidence that matrix completion methods based on nuclear norm penalization perform well even in scenarios where any low dimensional structure is likely non-linear. As these methods were developed for linear low rank structure, this is, at first glance, a bit surprising. There has been some work giving theoretical justification for these empirical results \citep{chatterjee2015matrix, udell2019big}. In particular, they note that in the presence of some types of non-linear low-dimensional structure in $M$, nuclear norm-based matrix completion methods can still consistently estimate $M$. These work additionally gives some non-stochastic approximation error results. However, optimality of the statistical perform of nuclear-norm-based matrix completion is not considered to the best of our knowledge.


		In this manuscript, we delve further into the performance of matrix completion for $M$ with low-dimensional, non-linear structure. In particular, we consider $M$ with rows that can be embedded in a low-dimensional smooth manifold. We then (i) show that nuclear norm-based matrix completion can consistently estimate $M$; (ii) characterize the rate at which the reconstruction error converges to $0$ as a function of the size of the matrix, number of observed entries, and smoothness and dimension of the underlying manifold; and (iii) prove that, up to a log term, this rate cannot by improved upon by any method; that is, our upper bound is actually the minimax rate optimal for reconstruction error in this problem.  Furthermore, our error bounds (and our techniques) also relate the matrix completion problem clearly to more classical non-parametric estimation: Our reconstruction error bounds parallel the minimax rate of mean squared error (MSE) in the nonparametric regression setting. Results (ii) and (iii), we believe, are novel.
		

		Our experiments on synthetic data corroborate our theoretical findings. In particular, they suggest that the finite sample empirical performance of matrix completion in non-linear low rank embeddings is consistent with the asymptotic theoretical error bounds. These empirical results also corroborate the claim that better performance is achieved when the embedding of the underlying matrix $M$ lies in a smoother manifold.

		\section{Methods}\label{sec:setup}

		\subsection{Problem setup}
		We start by giving some notation. We use upper case letters to represent matrices and lower case letters to represent scalars. The trace inner product of any two matrices, $M, B\in \mathbb R^{n\times p},\ n,p \in \mathbb{Z}^+$, is $\langle M, B\rangle = \operatorname{tr}(M^TB)$. The element-wise infinity norm of $M\in \mathbb R^{n\times p}$ is defined by $\|M\|_\infty= \max_{1\le i \le n, 1\le j \le p}|m_{ij}|$ where $m_{ij}$ denotes the $(i,j)$-th entry of $M$. We also denote the Frobenius norm of matrix $M$ as $\|M\|_F = \sqrt{\sum_{i=1}^{n}\sum_{j=1}^{p}m_{ij}^2}$.

		In the general matrix completion problem, we randomly observe some of the entries from a matrix $M\in\mathbb{R}^{n\times p}$; the observed entries may also be contaminated with error. To support our later theoretical derivations, we will describe this process in terms of a set of mask matrices $X_t\in \mathbb{R}^{n\times p}$ and observed values $y_t \in\mathbb{R}$. Each $X_t$ is a matrix with a single $1$ whose position is indexed by $t$ and all other entries are equal to $0$ as follows:
		\begin{equation}\label{eq::mask_mat}
			X_t = \begin{pmatrix} 
				0 & 0& \cdots & 0 & \cdots & 0 \\
				\vdots &  & \vdots &   & \vdots\\
				0 &0  & \cdots & 1 &\cdots & 0\\
				\vdots &  & \vdots &   & \vdots\\
				0 &0 & \cdots & 0 & \cdots & 0 \\
			\end{pmatrix}_{n\times p}.
		\end{equation}
		The collection of matrices $X_t$ fall in the set $\mathcal{X} = \{e_n(i)e_p(j)^T, \textrm{ for all }i=1,\ldots,n \textrm{ and  }j=1,\ldots,p\}$, where $e_n(i)\in \mathbb R^n$ is the basis vector consisting of all zeros except for a single 1 at $i$th entry. In this formulation, $X_t$ indicates the location in $M$ where $y_t$ is drawn from. That is, for $X_t = e_n(i)e_p(j)^T \in \mathcal{X}$, $\langle X_t, M\rangle = m_{ij}$. 
		
		Now, we can frame the matrix completion problem as follows: Suppose we have $N$ pairs of observations $(X_t,y_t)$,  $t=1,\ldots, N$, that satisfy
		\begin{equation}\label{eq::data_model}
			y_t = \langle X_t, M\rangle + \xi_t,
		\end{equation}
		where $\xi_t$ are i.i.d random errors distributed $N(0,\sigma^2)$, $M \in \mathbb{R}^{n\times p}$ is the underlying true matrix to be recovered, and $y_t\in \mathbb{R}$ are observed values. The observed matrix can be written as $Y = \sum_{t=1}^{N}y_tX_t$ where $N$ is the number of observed entries. We assume that $X_t$ is uniformly sampled at random from $\mathcal{X}$ \citep{koltchinskii2011}, i.e. $X_t\sim \Pi$, and the probability that the $(i,j)$th entry of $X_t$ equals to 1 is $\pi_{ij} = \operatorname{P}(X_t = e_i(n)e_j(p)^T) = \frac{1}{np}$ for $1 \le i \le n,  1 \le j \le p$. This is essentially a missing completely at random (MCAR) assumption. 
		
				
		
		The goal is to recover $M$ given pairs $(X_t, y_t)$, $t = 1,2,...,N$, and we are generally interested in the setting where $N\ll np$. To solve this problem, existing methods often assume that $M$ has low rank (or approximately low rank), i.e. $M \simeq UV^T$ with $U \in \mathbb{R}^{n\times r}$ and $V \in \mathbb{R}^{p \times r}$ for some integer $r \ll \min(n,p)$.  In contrast to this low rank assumption, this paper studies the problem where $M$ is not necessarily low-rank but generated from a low-dimensional non-linear manifold. This notion is formalized in the next section.

		\subsection{Non-linearly Embeddable Matrices}
		We begin by formalizing what we mean by ``low-dimensional non-linear structure''. Consider a matrix $M$, a positive integer $K$, and a function class  $\mathcal{F} \subset \mathcal{L}^2\left(\mathbb{R}^K\right)$. We say $M$ is $\mathcal{F}$-embeddable if there exist functions $f_j\in\mathcal{F}: \mathbb{R}^K \to \mathbb{R}, \ j=1,\ldots,p$, and a matrix $\bm\Theta\in\mathbb{R}^{n\times K}$ such that
		\begin{equation}\label{eq::mat_gen}
			m_{ij} = f_j\left(\bm\theta_{i,\cdot}\right), i=1,\ldots,n, j=1,\ldots,p,
		\end{equation}
		where $m_{ij}$ is the ($i,j$) entry of $M$ and $\bm\Theta \in \mathbb{R}^{n\times K}$ is a matrix (with $\bm\theta_{i,\cdot}$ indicating its $i$th row vector). Here, $\bm\Theta$ gives an embedding of our observations from its original $p$-dimensional space into a $K$-dimensional space ($K \le p$). 
	The set of functions $\{f_j\}_{j=1}^p \subset \mathcal{F}$ identifies how to map our embedding in $\mathbb{R}^K$ back to $\mathbb{R}^p$. 

		In classical matrix completion setting, where we assume $M$ is low-rank, nuclear norm penalized empirical risk minimization is often used to estimate $M$ \citep{argyriou2008convex, candes2010matrix, negahban2011estimation}; more specifically, the estimator is obtained by,
		\begin{equation}\label{eq::est_proc_old}
		 \arg\min_{M}\left\{N^{-1}\sum_{t=1}^N(y_t - \langle X_t, M\rangle)^2 + \lambda \|M\|_*\right\},
		\end{equation}
		where $\lambda$ is a regularization parameter which is used to balance the trade-off between fitting the unknown matrix using least squares and minimizing the nuclear norm $\|M\|_*$. This ``matrix lasso'' is known to have strong theoretical properties when $M$ is low rank \citep{argyriou2008convex,candes2010matrix,negahban2011estimation,cai2016matrix}. However, in our scenario, $M$ likely does not have low rank and previous work does not fully explain the effectiveness of the estimate from \eqref{eq::est_proc_old} in this setting.

		
		While the estimator in \eqref{eq::est_proc_old} is simple and quite well known, it fails to exploit knowledge of the sampling scheme (which is often known or at least assumed to be known). To use the assumption that the mask matrices $\{X_t\}_{t=1}^N$ are i.i.d. uniformly sampled from $\mathcal{X}$, we study a slight modification to \eqref{eq::est_proc_old} described in \citet{koltchinskii2011}:
		\begin{equation}\label{eq::est_proc}
			\begin{aligned}
				\widehat M & \leftarrow \arg\min_{M} \left\{\frac{1}{np}\|M\|_F^2 - \left\langle \frac{2}{N} \sum_{t=1}^{N} y_t X_t, M \right\rangle + \lambda \|M\|_*\right\}
			\end{aligned}
		\end{equation}
		
		After some simple manipulation, \eqref{eq::est_proc} can be further reduced to minimizing
		\[
		\frac{1}{np}\|M-R\|_F^2 +\lambda \|M\|_* .
		\] 
		where $R = \frac{np}{N}\sum_{t=1}^{N}y_tX_t = \frac{np}{N}Y$. Thus, $\widehat{M}$, the solution to \eqref{eq::est_proc}, is merely a singular-value soft-thresholding estimator:
		\begin{equation}\label{eq::hat_M}
			\widehat M = \sum_{j=1}^{\operatorname{rank}(R)}(\Lambda_j(R)-\lambda np/2)_+u_j(R)v_j(R)^T,
		\end{equation}
		where $\Lambda_j(R)$ are the singular values and $u_j(R)$, $v_j(R)$ are the left and right singular vectors of $R$ such that $R = \sum_{j=1}^{\operatorname{rank}(R)}\Lambda_j(R)u_j(R)v_j(R)^T$. \citet{koltchinskii2011} established the rate optimality of this estimator with respect to Frobenius-norm loss when $M$ is low rank. In this paper, we aim to ultimately claim that $\widehat M$ in~\eqref{eq::est_proc} is still a consistent and rate optimal estimator of $M$ in the case that $M$ is non-linearly embeddable, as long as $K$ is small and the function class $\mathcal F$ is sufficiently smooth. 

		\subsection{Approximation of Embeddable Matrices}\label{sec::approximation}
		Our goal is to show that the estimator obtained by \eqref{eq::est_proc} is consistent for the true underlying matrix $M$ with respect to Frobenius-norm loss (and characterize the convergence rate), when $M$ is non-linearly embeddable. To this end, we first show that $M$ can be well approximated by a series of matrices with low (and only slowly growing) rank as long as the function class $\mathcal F$ is sufficiently smooth. More specifically, we will need the following condition for the function class $\mathcal{F}$.

		
	 \begin{condition}\label{cond::approx}
			Given a function class $\mathcal{F}$, let $C_0$ denote a fixed positive number. Suppose that for any $\epsilon > 0$, there exists a finite set of functions 
			$\mathcal{F}_{\epsilon} = \left\{\psi_1, \psi_2, \ldots, \psi_{J(\epsilon)}\right\} \subset \mathcal{F}$, such that
			\begin{equation}\label{eq:bound}
			   \left\|\psi\right\|_{\infty} \leq C_0,\quad\text{for all }\psi\in\mathcal{F}_{\epsilon},
			\end{equation}
			and
			\begin{equation}\label{eq:appx}
				\max_{f\in\mathcal{F}}\min_{\left\|\beta\right\|_2^2 \leq C_0} \left\|f - \sum_{l=1}^{J(\epsilon)}\beta_l \psi_l\right\|_{\infty} \leq \epsilon.
			\end{equation}
			For each $\epsilon$, we denote by $\mathcal{F}^{*}_{\epsilon}$ a set of minimal cardinality such that \eqref{eq:bound} and \eqref{eq:appx} hold. We let $J^{*}(\epsilon)$ denote the cardinality of $\mathcal{F}^{*}_{\epsilon}$.
		\end{condition}

		For a function class $\mathcal{F}$, Condition~\ref{cond::approx} characterizes the minimal number of basis functions needed to uniformly approximate functions in $\mathcal{F}$ up to precision $\epsilon$. In Section~\ref{sec::theory}, we shall apply this condition to $K$-dimensional, $L$-th order differentiable functions, and show how this number scales as a function of $\epsilon$. 

		Based on the above condition, we can establish the existence of an approximation matrix which is sufficiently close to the true matrix $M$ and has a bounded nuclear norm. 
		
		\begin{lemma} \label{lem::approx}
			Suppose matrix $M\in\mathbb{R}^{n\times p}$ is $\mathcal{F}$-embeddable, and $\mathcal{F}$ satisfies Condition~\ref{cond::approx}. Then, for any $\epsilon>0$, there exists a matrix $M^\epsilon$ satisfying  $\operatorname{rank}(M^\epsilon) = J^*(\epsilon) \le \min(n,p)$ such that 
			\begin{equation}\label{eq::approx_mat}
				\left\|M^{\epsilon} - M\right\|_{\infty} \leq \epsilon. 
			\end{equation}
			
			Furthermore, the nuclear norm of $M^\epsilon$ is bounded: There exists $C_1>0$ (independent of $\epsilon$) such that
			\begin{equation}\label{eq::bound_nuclear}
				\frac{1}{\sqrt{np}}\left\|M^{\epsilon}\right\|_{*} \leq C_1 J^*(\epsilon).
			\end{equation}
		\end{lemma}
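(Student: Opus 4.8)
The plan is to construct $M^\epsilon$ explicitly by applying Condition~\ref{cond::approx} to each of the coordinate functions $f_1,\dots,f_p$ that generate $M$, using a single shared dictionary. First I would fix $\epsilon>0$ and take $\mathcal{F}^*_\epsilon=\{\psi_1,\dots,\psi_{J^*(\epsilon)}\}$, the minimal approximating set from Condition~\ref{cond::approx}. Since each $f_j\in\mathcal{F}$, the approximation property \eqref{eq:appx} supplies, for every $j=1,\dots,p$, a coefficient vector $\beta^{(j)}\in\mathbb{R}^{J^*(\epsilon)}$ with $\|\beta^{(j)}\|_2^2\le C_0$ and $\big\|f_j-\sum_{l=1}^{J^*(\epsilon)}\beta^{(j)}_l\psi_l\big\|_\infty\le\epsilon$. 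I then define $M^\epsilon$ entrywise by $m^\epsilon_{ij}=\sum_{l=1}^{J^*(\epsilon)}\beta^{(j)}_l\,\psi_l(\bm\theta_{i,\cdot})$, equivalently $M^\epsilon=\Psi B^{\mathsf T}$ with $\Psi_{il}=\psi_l(\bm\theta_{i,\cdot})$ and $B_{jl}=\beta^{(j)}_l$; this immediately gives $\operatorname{rank}(M^\epsilon)\le J^*(\epsilon)$ (and $\le\min(n,p)$ automatically), and it is this upper bound on the rank that the remaining estimates use.

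Next, the sup-norm bound \eqref{eq::approx_mat} is essentially free: using $m_{ij}=f_j(\bm\theta_{i,\cdot})$ from $\mathcal{F}$-embeddability, for every $(i,j)$ we have $|m^\epsilon_{ij}-m_{ij}|=\big|\sum_l\beta^{(j)}_l\psi_l(\bm\theta_{i,\cdot})-f_j(\bm\theta_{i,\cdot})\big|\le\big\|f_j-\sum_l\beta^{(j)}_l\psi_l\big\|_\infty\le\epsilon$.

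For the nuclear-norm bound \eqref{eq::bound_nuclear} I would combine the rank bound with an entrywise magnitude bound on $M^\epsilon$ obtained from the two constraints in Condition~\ref{cond::approx}. By Cauchy--Schwarz, $\|\beta^{(j)}\|_2^2\le C_0$, and $\|\psi_l\|_\infty\le C_0$, one gets $|m^\epsilon_{ij}|\le\|\beta^{(j)}\|_2\big(\sum_l\psi_l(\bm\theta_{i,\cdot})^2\big)^{1/2}\le C_0^{3/2}J^*(\epsilon)^{1/2}$, hence $\|M^\epsilon\|_F\le (np)^{1/2}C_0^{3/2}J^*(\epsilon)^{1/2}$. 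Since $\operatorname{rank}(M^\epsilon)\le J^*(\epsilon)$, the elementary inequality $\|A\|_*\le\sqrt{\operatorname{rank}(A)}\,\|A\|_F$ then yields $\|M^\epsilon\|_*\le (np)^{1/2}C_0^{3/2}J^*(\epsilon)$, i.e.\ \eqref{eq::bound_nuclear} with $C_1=C_0^{3/2}$, which is independent of $\epsilon$ as required.

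There is no deep obstacle here; the content is entirely in choosing the right intermediate object. The one place that needs a little care is the nuclear-norm estimate: bounding $\|M^\epsilon\|_*$ directly is awkward, so the plan routes through $\|M^\epsilon\|_*\le\sqrt{\operatorname{rank}(M^\epsilon)}\,\|M^\epsilon\|_F$ and controls $\|M^\epsilon\|_F$ via the dictionary representation of $M^\epsilon$ itself rather than via $M$ (the latter would require an a priori bound on $\|M\|_\infty$, which Condition~\ref{cond::approx} does not directly provide). It is worth noting that both factors of $J^*(\epsilon)^{1/2}$ — one from $\operatorname{rank}(M^\epsilon)$, the other from the $\ell_2$ geometry of the coefficients and the dictionary — are genuinely needed to land on the clean bound that is linear in $J^*(\epsilon)$.
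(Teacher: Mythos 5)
Your proof is correct, and the core construction is the same as the paper's: a single shared dictionary $\mathcal{F}^*_\epsilon$, coefficient vectors $\beta^{(j)}$ with $\|\beta^{(j)}\|_2^2\le C_0$ for each $f_j$, and the factorization $M^\epsilon=\Psi B$, which gives the rank and sup-norm claims exactly as in the paper (your remark that only $\operatorname{rank}(M^\epsilon)\le J^*(\epsilon)$ is available, and that this is all that is needed, is in fact slightly more careful than the paper's stated equality). The one place you genuinely diverge is the nuclear-norm bound. The paper invokes the variational characterization $\frac{1}{\sqrt{np}}\|M^{\epsilon}\|_{*}=\frac{1}{2}\min_{UV^{\top}=M^{\epsilon}}\bigl(\frac{1}{n}\|U\|_F^2+\frac{1}{p}\|V\|_F^2\bigr)$ (Srebro's factorization form) and plugs in the explicit factorization $U=\Psi$, $V=B^{\top}$, using the entrywise bounds $\|\Psi\|_\infty,\|B\|_\infty\le C_0$ to get $C_1=C_0^2$. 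You instead route through the elementary inequality $\|A\|_*\le\sqrt{\operatorname{rank}(A)}\,\|A\|_F$ together with an entrywise Cauchy--Schwarz bound $|m^\epsilon_{ij}|\le C_0^{3/2}\sqrt{J^*(\epsilon)}$, arriving at $C_1=C_0^{3/2}$. Both arguments are valid and land on the same linear-in-$J^*(\epsilon)$ bound; the paper's version exploits the specific factorization it has already built (so no extra factor of $\sqrt{J^*(\epsilon)}$ enters through the Frobenius norm), while yours avoids the variational form of the nuclear norm entirely and needs only the rank bound plus a magnitude bound on the entries, at the cost of spending one $\sqrt{J^*(\epsilon)}$ on the rank and one on the entrywise estimate, as you note.
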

		
		The proof is given in Appendix~\ref{proof_lem_approx}. Note, for the $\mathcal{F}$ we consider later (restricted to smooth functions) we will show that $J^{*}(\epsilon) << \operatorname{min}(n,p)$. This parallels results in classical non-parametric regression where many function-spaces considered can be approximated uniformly with small error by linear combinations of relatively few basis functions \citep{tsybakov2009introduction}.
		

		\section{Consistency}\label{sec::theory}

		Using Lemma~\ref{cond::approx}, it is relatively straightforward to evaluate the performance of our estimator $\widehat M$ in \eqref{eq::est_proc}. The performance metric simplest to theoretically analyze is $N^{-1}\sum_{i=1}^{N}\left\langle X_i, \widehat{M} - M\right\rangle^2$. However, this criterion only evaluates the prediction error on the \emph{observed} entries. This is unsatisfying as our ultimate goal is to recover the entire matrix. 
		Thus, we instead aim to evaluate the performance of $\widehat{M}$ based on the metric  $\frac{1}{np} \|\widehat M - M\|_F^2$. The following result gives an upper bound for the performance of our estimator $\widehat{M}$ in this metric.

		\begin{theorem}\label{thm::upper_bound}
			Suppose we observe N pairs $\{(y_t, X_t)\}_{t=1}^N$ satisfying data generating model \eqref{eq::data_model} where $X_t$ are i.i.d. uniformly sampled from $\mathcal{X}$.  Assume the true matrix $M \in \mathbb{R}^{n\times p}$ is $\mathcal{F}$-embeddable where $\mathcal{F}$ satisfies Condition \ref{cond::approx}. Further suppose that $N \ge (n\wedge p)\log^2(n+p)$.  Then there exists a constant $C_2 >0$ (that only depends on $\sigma$ and $ \|M\|_\infty$) such that if we define the regularization parameter $\lambda$ by
			\[\lambda =  C_2\sqrt{\frac{\log (n+p)}{N(n\wedge p)}},
			\]
			then, with probability at least $1-2(n+p)^{-1}$, the completion error of $\widehat M$ in \eqref{eq::hat_M} is bounded by
			\begin{equation}\label{eq::generalbound}
				\frac{1}{np}\left\|\widehat M-M\right\|_F^2 \le
				 C_2^2\left(\frac{1+\sqrt{2}}{2}\right)^2\frac{(n\vee p)\log(n+p)}{N}J^*(\epsilon) 
				+ \epsilon^2,
			\end{equation}
			for any $\epsilon>0$. Here, $J^*(\epsilon)$ is the rank of the approximation matrix $M^\epsilon$ with $\|M - M^\epsilon\|_\infty \le \epsilon$, which corresponds to the minimal cardinality of $\mathcal{F}^* \subset \mathcal{F}$ satisfying Condition~\ref{cond::approx}.
		\end{theorem}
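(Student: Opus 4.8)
The plan is to run a matrix‑lasso oracle‑inequality argument in the style of \citet{koltchinskii2011}, with Lemma~\ref{lem::approx} supplying a low‑rank surrogate $M^\epsilon$ for the (generally non‑low‑rank) target $M$. Write $\Sigma=\tfrac1N\sum_{t=1}^N y_tX_t$, so that \eqref{eq::est_proc} minimizes $\tfrac1{np}\|M-R\|_F^2+\lambda\|M\|_*$ with $R=np\,\Sigma$; since $\mathbb{E}[y_tX_t]=\tfrac1{np}M$ we have $\mathbb{E}[R]=M$, and I set $\mathcal{D}:=\Sigma-\mathbb{E}\Sigma=\tfrac1{np}(R-M)$. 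A structural point to exploit is that \emph{no restricted‑eigenvalue/design condition is needed}: the quadratic part of \eqref{eq::est_proc} is the \emph{exact} Frobenius norm $\tfrac1{np}\|M\|_F^2$, so the objective is $\tfrac1{np}$‑strongly convex in $\|\cdot\|_F$ with the true curvature --- which is exactly why one uses \eqref{eq::est_proc} rather than \eqref{eq::est_proc_old}. The proof then rests on: (i) an oracle inequality from optimality of $\widehat M$ plus this strong convexity, evaluated at $M^\epsilon$; (ii) decomposability of the nuclear norm at $M^\epsilon$; and (iii) a high‑probability spectral bound $\|\mathcal{D}\|_{\mathrm{op}}\le\lambda$.

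\noindent\emph{Steps (i)--(ii).} Strong convexity and $0\in\partial\big(\tfrac1{np}\|\cdot-R\|_F^2+\lambda\|\cdot\|_*\big)(\widehat M)$ give, with $H:=\widehat M-M^\epsilon$,
\[
\tfrac1{np}\|H\|_F^2 \;\le\; \langle H,\mathcal D\rangle + \tfrac1{np}\langle H,\,M-M^\epsilon\rangle + \lambda\big(\|M^\epsilon\|_*-\|\widehat M\|_*\big).
\]
By \eqref{eq::approx_mat}, $\|M-M^\epsilon\|_F\le\sqrt{np}\,\|M-M^\epsilon\|_\infty\le\sqrt{np}\,\epsilon$, so the bias term is at most $\tfrac{\epsilon}{\sqrt{np}}\|H\|_F$. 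Writing $r=J^*(\epsilon)=\operatorname{rank}(M^\epsilon)$ and decomposing $H=\mathcal{P}(H)+\mathcal{P}^\perp(H)$ via the projections onto / off the singular subspaces of $M^\epsilon$ (so $\operatorname{rank}(\mathcal{P}(H))\le2r$), the subgradient inequality for $\|\cdot\|_*$ gives $\|\widehat M\|_*-\|M^\epsilon\|_*\ge\langle UV^T,H\rangle+\|\mathcal{P}^\perp(H)\|_*$ with $UV^T=\sum_{k\le r}u_kv_k^T$. On $\{\|\mathcal{D}\|_{\mathrm{op}}\le\lambda\}$ the off‑support terms cancel, $\langle\mathcal{P}^\perp(H),\mathcal{D}\rangle-\lambda\|\mathcal{P}^\perp(H)\|_*\le0$ (using $\langle A,B\rangle\le\|A\|_{\mathrm{op}}\|B\|_*$), while the on‑support terms are bounded by $c\,\lambda\sqrt{r}\,\|H\|_F$ for an explicit $c$ (using $\|\mathcal{P}(H)\|_*\le\sqrt{2r}\|\mathcal{P}(H)\|_F$ and $|\langle UV^T,H\rangle|\le\sqrt{r}\|\mathcal{P}(H)\|_F$). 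Hence $\tfrac1{np}\|H\|_F^2\le\tfrac{\epsilon}{\sqrt{np}}\|H\|_F+c\,\lambda\sqrt{r}\|H\|_F$, so $\tfrac1{\sqrt{np}}\|H\|_F\le\epsilon+c\,\lambda\sqrt{np\,r}$; combining with $\|\widehat M-M\|_F\le\|H\|_F+\sqrt{np}\,\epsilon$ and $np\lambda^2=C_2^2(n\vee p)\log(n+p)/N$, and tracking the numerical factors, yields \eqref{eq::generalbound}. (Replacing the on‑support analysis by the coarse bound $\lambda\|M^\epsilon\|_*$ from \eqref{eq::bound_nuclear} would instead give the slower rate $\sqrt{(n\vee p)\log(n+p)/N}\cdot J^*(\epsilon)$; the fast rate uses only $\operatorname{rank}(M^\epsilon)$.)

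\noindent\emph{Step (iii) --- the crux.} It remains to show that $\lambda=C_2\sqrt{\log(n+p)/(N(n\wedge p))}$ satisfies $\|\mathcal{D}\|_{\mathrm{op}}\le\lambda$ with probability $\ge1-2(n+p)^{-1}$, and this is the main obstacle. Split $\mathcal{D}=\tfrac1N\sum_t\big(m_{i_tj_t}X_t-\tfrac1{np}M\big)+\tfrac1N\sum_t\xi_tX_t=:\mathcal{D}_1+\mathcal{D}_2$, a sampling‑fluctuation term and a Gaussian‑noise term, each a normalized sum of i.i.d.\ mean‑zero rank‑one random matrices. Using $\mathbb{E}[X_tX_t^T]=\tfrac1nI_n$ and $\mathbb{E}[X_t^TX_t]=\tfrac1pI_p$, the matrix variances are of order $\|M\|_\infty^2/(N(n\wedge p))$ and $\sigma^2/(N(n\wedge p))$. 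For $\mathcal{D}_1$ the summands are bounded by $\lesssim\|M\|_\infty$ and matrix Bernstein gives $\|\mathcal{D}_1\|_{\mathrm{op}}\lesssim\|M\|_\infty\big(\sqrt{\tfrac{\log(n+p)}{N(n\wedge p)}}+\tfrac{\log(n+p)}{N}\big)$; for $\mathcal{D}_2$ the scalars $\xi_t$ are only sub‑exponential, so a truncation (or a sub‑exponential matrix Bernstein bound) costs an extra $\sqrt{\log(n+p)}$ on the deviation term, $\|\mathcal{D}_2\|_{\mathrm{op}}\lesssim\sigma\big(\sqrt{\tfrac{\log(n+p)}{N(n\wedge p)}}+\sqrt{\log(n+p)}\,\tfrac{\log(n+p)}{N}\big)$, each with probability $\ge1-(n+p)^{-1}$. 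The hypothesis $N\ge(n\wedge p)\log^2(n+p)$ is precisely what makes the $\sqrt{\cdot}$ terms dominate in both, so that enlarging $C_2$ to a suitable multiple of $\sigma\vee\|M\|_\infty$ forces $\|\mathcal{D}\|_{\mathrm{op}}\le\lambda$ on the intersection of the two events. Everything else is routine given Lemma~\ref{lem::approx}; the delicate part is this spectral‑norm concentration, and it is the sub‑exponential correction for $\mathcal{D}_2$ (the Gaussian noise), not the bounded $\mathcal{D}_1$, that is responsible for the $\log^2(n+p)$ --- rather than $\log(n+p)$ --- in the sample‑size requirement.
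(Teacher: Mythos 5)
Your proposal is correct and follows essentially the same route as the paper's proof: a deterministic oracle inequality at the low-rank surrogate $M^\epsilon$ from Lemma~\ref{lem::approx}, using nuclear-norm decomposability on the event $\lambda \gtrsim \|\Delta\|_{op}$ (the paper's Lemma~\ref{lem::upper_bound0}), combined with a matrix-Bernstein bound on $\|\Delta\|_{op}$ obtained by splitting it into the Gaussian-noise part and the bounded sampling-fluctuation part (the paper's Lemma~\ref{lem::op_bound}), with $N\ge (n\wedge p)\log^2(n+p)$ used exactly as you say to make the square-root term dominate the Orlicz/sub-Gaussian deviation correction. The only deviations are constant-level bookkeeping: the basic inequality actually carries a factor $2$ on $\langle H,\mathcal D\rangle$ (so the event needed is $\lambda\ge 2\|\Delta\|_{op}$, as in the paper), and your triangle-inequality step $\|\widehat M-M\|_F\le\|H\|_F+\sqrt{np}\,\epsilon$ delivers a constant multiple of $\epsilon^2$ rather than exactly $\epsilon^2$, whereas the paper uses the identity $2\langle \widehat M-M,\widehat M-M^\epsilon\rangle=\|\widehat M-M\|_F^2-\|M-M^\epsilon\|_F^2+\|\widehat M-M^\epsilon\|_F^2$ to keep the coefficient $1$ on $\epsilon^2$ and obtain the stated $\left(\frac{1+\sqrt{2}}{2}\right)^2$ factor.
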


		The upper bound in Theorem~\ref{thm::upper_bound} can be established by extending the results from \citet{koltchinskii2011}. The details of the proof are given in the Appendix~\ref{proof_upper_bound}. The two terms on the right-hand-side of \eqref{eq::generalbound} clarify the trade-off between the approximation error, $\epsilon$, and the cardinality of the minimal linear approximation set $\mathcal{F}^*$, $J^*(\epsilon)$. Our upper bound is consistent with the results in \citet{koltchinskii2011}, where the error is decomposed into a misspecification error $(\epsilon^2)$ and a prediction error. Usually, when there is no misspecification, i.e., the true matrix $M$ is low rank, the prediction error is linearly related to the rank of $M$ \citep{candes2011tight, klopp2014noisy, cai2016matrix}. In our scenario, where the low-rank assumption is violated, the prediction error in \eqref{eq::generalbound} is linearly related to the rank of the approximation matrix.

		Ideas similar to this occur in more traditional non-parametric estimation problems. For example, when using projection estimators in H\"older and Sobolev spaces, one of the main rate-optimal estimation approaches requires a truncated basis to be selected for projection that will grow with the sample size $N$ \citep{tsybakov2008introduction}. However, in those examples, the number of basis vectors is a tuning parameter in the algorithm, and the set of basis functions must be selected in advance. Here, both the set of basis functions and the truncation level are rather just theoretical tools for analyzing the algorithm performance. In employing matrix completion, the analyst only needs to select $\lambda$.
		
		We note that $N \ge (n\wedge p)\log^2(n+p)$ in the above Theorem~\ref{thm::upper_bound} is a quite weak condition on the number of observations: $N$ could satisfy this and still be far less than $np$. For the results of the latent space model in \cite{chatterjee2015matrix}, they require at least $O\left( n^{\frac{2(K+1)}{K+2}}\right)$ entries to be observed out of $n^2$ entries to guarantee the consistency for recovering an $n\times n$ matrix. This implies that one needs to observe $O\left( n^{\frac{K}{K+2}}\right)$ entries out of $n$ in each row, as compared to our much weaker requirement of $O\left(\log^2(n)\right)$ per row.


		We now specialize our results to matrices that are $\mathcal{F}$-embeddable for $\mathcal{F}$ containing functions with bounded derivatives. This is a natural class of functions to work with (though one could alternatively work in a multivariate Sobolev or H\"older space).

	\begin{condition}\label{cond::bound_derivative}
				$M$ is $\mathcal{F}$-embeddable, where $\mathcal{F}$ contains functions with uniformly bounded $L$-th order mixed partials (for some fixed $L>0$). More formally, define $\mathcal{F}(L,\gamma,K)$, for $L,K \geq 1$ as the set of $L$-th order differentiable functions from $\mathbb{R}_{[0,1]}^K$ to $\mathbb{R}$ satisfying
			\begin{equation}\label{eq::bound_deriv}
				\left|\frac{\partial^L}{\partial x_1^{L_1}\cdots x_K^{L_K}} f(\mathbf{x}) \bigg\vert_{\mathbf{x}=\mathbf{x^0}}\right| \leq \gamma,
			\end{equation}
			for all $\mathbf x^0  = (x_1^0,\ldots,x_K^0)\in \mathbb{R}_{[0,1]}^K \subset \mathbb{R}^K$ and all integers $L_1,\ldots,L_K$ satisfying $L_1+\cdots+L_K=L$. Now, additionally define the set
			\begin{equation}\label{eq::mat_class}
				\begin{aligned}
					\mathcal{M}(L,\gamma,K) &= \{ M \in \mathbb{R}^{n\times p}\,\mid \, m_{ij} = f_j(\bm\theta_{i,\cdot}),\\ &\textrm{ with } f_j\in\mathcal{F}(L,\gamma,K),\ j\leq p,\text{ and }\bm\theta_{i,\cdot}\in\mathbb{R}_{[0,1]}^K,\ i \leq n\}
				\end{aligned}
			\end{equation}
			This is the set of $F(L,\gamma,K)$ embeddable matrices, where the embedding lives in a compact space (for convenience we use the $\ell_{\infty}$  ball). Our formal condition here is that $M\in 	\mathcal{M}(L,\gamma,K)$.
		\end{condition}

		{\bf Remark.} In the above condition, we will often suppress the dependence on $\gamma$, and write $\mathcal{M}(L,K)$ and $\mathcal{F}(L,K)$. This is because $\gamma$ does not affect the convergence rate of our estimator. Additionally, here we specify the domain of the embeddings to be $[0,1]^K$ for ease of exposition. This is actually general as we could rescale any compactly supported embedding to live in this interval.

		Condition~\ref{cond::bound_derivative} imposes an additional constraint on our embedding: The underlying manifold on which our matrix lives should be smooth. Here smoothness is characterized by a number of bounded derivatives. As we will see, this function class engages well with Condition~\ref{cond::approx} in the sense that we are able to characterize $J^{*}(\epsilon)$ for the function class $\mathcal{F}(L,K)$. This is essentially a multivariate H\"older class, which has been widely used in the area of non-parametric estimation \citep{tsybakov2008introduction}. One could alternatively look at this as a multivariate Sobolev class under the sup-norm, $W^{L,\infty}(\mathbb{R}^K)$.

		The following lemma gives the number of basis elements needed to linearly approximate a matrix satisfying the above condition, with bounded approximation error $\epsilon$.
		
		\begin{lemma}\label{lem::J_star_bound}
			For the function class $\mathcal{F}(L,K)$ described in Condition~\ref{cond::bound_derivative}, we have that Condition~\ref{cond::approx} is satisfied with $J^*(\epsilon) = O\left(\epsilon^{-K/L}\right)$.
		\end{lemma}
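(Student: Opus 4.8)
The strategy is the standard one from nonparametric approximation theory: cover the embedding domain $[0,1]^K$ by a regular grid, replace each $f$ on a grid cell by its Taylor polynomial of degree $L-1$ about the cell center, and count the resulting collection of monomial building blocks. Fix a mesh size $h=1/m$ and partition $[0,1]^K$ into the $m^K$ subcubes $\{Q_v\}$ of side $h$ with centers $c_v$. For $f\in\mathcal{F}(L,K)$ and $x\in Q_v$, the integral form of the Taylor remainder expresses $f(x)-\sum_{|\alpha|\le L-1}\frac{D^\alpha f(c_v)}{\alpha!}(x-c_v)^\alpha$ as a combination of the \emph{mixed} partials $D^\alpha f$ with $|\alpha|=L$ only, each bounded by $\gamma$; since $|x-c_v|_i\le h/2$ this yields
\[
\Big\| f - \sum_{|\alpha|\le L-1}\tfrac{D^\alpha f(c_v)}{\alpha!}(x-c_v)^\alpha\Big\|_{L^\infty(Q_v)} \le C_{K,L}\,\gamma\, h^{L},
\]
uniformly over $v$ and over $f\in\mathcal{F}(L,K)$. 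Thus $f$ is within $C_{K,L}\gamma h^L$ of the span of the functions $\{(x-c_v)^\alpha\,\mathbf{1}_{Q_v}(x):|\alpha|\le L-1,\ v\}$, whose cardinality is $\binom{K+L-1}{K}m^K$, i.e.\ a constant depending only on $K,L$ times $m^K$. Taking $m\asymp(\gamma/\epsilon)^{1/L}$ makes the error at most $\epsilon$, and the number of building blocks is then $O(m^K)=O(\epsilon^{-K/L})$, which is the claimed bound.

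To turn this into a set $\mathcal{F}_\epsilon\subset\mathcal{F}(L,K)$ satisfying the normalization requirements \eqref{eq:bound}–\eqref{eq:appx} of Condition~\ref{cond::approx}, I would address two points. First, ``indicator $\times$ monomial'' functions are not in $\mathcal{F}(L,K)$ (not even continuous), so I replace the piecewise-Taylor approximant by $\widetilde f=\sum_v\phi_v(x)\,p_v(x)$, where $p_v$ is the Taylor polynomial at $c_v$ and $\{\phi_v\}$ is a fixed $C^\infty$ partition of unity subordinate to the enlarged cover $\{\tfrac32 Q_v\}$. Since $\widetilde f-f=\sum_v\phi_v(p_v-f)$ and each $p_v$ approximates $f$ to order $h^L$ on $\mathrm{supp}\,\phi_v$, the sup-norm error is still $O(\gamma h^L)$; the new building blocks $\phi_v(x)(x-c_v)^\alpha$ are $C^\infty$, hence (after the harmless rescaling of the embedding domain mentioned in the Remark following Condition~\ref{cond::bound_derivative}) lie in the required class, and their number is still $O(m^K)$. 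Second, I check the constants: the building blocks are uniformly bounded on $[0,1]^K$, so after multiplying by a fixed scalar they satisfy $\|\psi\|_\infty\le C_0$; and using that the target functions are uniformly bounded on the compact domain, the coefficients in the representation — which up to constants are the local Taylor coefficients $D^\alpha f(c_v)/\alpha!$, or, if one prefers, the coordinates of $f$ against an orthonormalized version of $\mathcal{F}_\epsilon$ — can be arranged to lie in a fixed $\ell_2$-ball.

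The rate itself is classical, so the main thing to get right is this reconciliation with the precise form of Condition~\ref{cond::approx}: keeping the basis functions genuinely smooth via the partition of unity while retaining the $O(m^K)$ count, and verifying that the Taylor coefficients can be bounded in $\ell_2$ uniformly over $\mathcal{F}$ and over $\epsilon$ (the potential obstruction being that the number of coefficients grows with $1/\epsilon$). Pinning down $C_{K,L}$ in the remainder bound and tracking the constant from the overlap of the $\phi_v$'s are routine; once these are in place, combining the counts gives $J^*(\epsilon)=O(\epsilon^{-K/L})$.
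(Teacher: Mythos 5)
Your proof is correct and follows essentially the same route as the paper: tessellate $[0,1]^K$ into cells of side $\asymp \epsilon^{1/L}$, approximate each $f$ on each cell by its Taylor polynomial about a cell center with the remainder controlled by the $\gamma$-bound on the $L$-th order mixed partials, and count the resulting $O(\epsilon^{-K/L})$ local polynomial terms. Your extra steps (the smooth partition of unity so the basis elements are genuinely smooth, and the check of the sup-norm and $\ell_2$-coefficient normalizations in Condition~\ref{cond::approx}) address details the paper's proof simply does not verify; the core counting argument and rate are identical.
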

		
		The proof of this lemma is given in Appendix~\ref{proof_J_star_bound}. Now, we can establish the final convergence result for smoothly embeddable matrices.

		\begin{theorem}\label{thm::upper_bound2}
			Under the same scenario and assumptions as in Theorem \ref{thm::upper_bound}, assume further the $\mathcal{F}(K,L)$-embeddable matrix $M$ satisfies Condition~\ref{cond::bound_derivative} for a given $L$ and $K$. Then, the upper bound \eqref{eq::generalbound} is optimized at $\epsilon = \left(\frac{(n\vee p)\log(n+p)}{N}\right)^{\frac{L}{2L+K}}$, resulting in 
			\begin{equation}\label{eq::upper_bound}
				\begin{aligned}
					\frac{1}{np}\left\|\widehat M-M\right\|_F^2 &=O_P\left(\left[\frac{(n\vee p)\log(n+p)}{N}\right]^{\frac{2L}{2L+K}}\right). 
				\end{aligned}
			\end{equation}
		\end{theorem}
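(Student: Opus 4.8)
The plan is to treat the right-hand side of \eqref{eq::generalbound} as a function of the free parameter $\epsilon$ and minimize it explicitly, feeding in Lemma~\ref{lem::J_star_bound} to control $J^*(\epsilon)$. By that lemma, $\mathcal{F}(L,K)$ satisfies Condition~\ref{cond::approx} with $J^*(\epsilon)=O(\epsilon^{-K/L})$; fix a constant $c>0$ (depending only on $L,K,\gamma$) with $J^*(\epsilon)\le c\,\epsilon^{-K/L}$ for all $\epsilon>0$. Writing $a:=(n\vee p)\log(n+p)/N$ and $C':=C_2^2\big(\tfrac{1+\sqrt2}{2}\big)^2 c$, Theorem~\ref{thm::upper_bound} then gives, on an event of probability at least $1-2(n+p)^{-1}$,
\[
\frac{1}{np}\big\|\widehat M-M\big\|_F^2\ \le\ C'\,a\,\epsilon^{-K/L}+\epsilon^2\ =:\ g(\epsilon),\qquad\text{for every }\epsilon>0.
\]

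Next I would minimize $g$ by elementary calculus. Since $g'(\epsilon)=-\tfrac{K}{L}C'a\,\epsilon^{-K/L-1}+2\epsilon$, the stationarity condition $g'(\epsilon)=0$ rearranges to $\epsilon^{(2L+K)/L}=\tfrac{K}{2L}C'a$, i.e.\ the minimizer is $\epsilon^\star\asymp a^{L/(2L+K)}$, matching the value in the statement (the constant is irrelevant for the rate). Substituting back, the two contributions balance: $a\,(\epsilon^\star)^{-K/L}\asymp a^{1-K/(2L+K)}=a^{2L/(2L+K)}$ and $(\epsilon^\star)^2\asymp a^{2L/(2L+K)}$, so $g(\epsilon^\star)=O\big(a^{2L/(2L+K)}\big)$. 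Because the event carrying the deterministic bound has probability $1-2(n+p)^{-1}\to1$, this converts into the asserted $O_P\big(a^{2L/(2L+K)}\big)$ conclusion \eqref{eq::upper_bound}.

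There is no genuinely hard step here --- the substance lies in Theorem~\ref{thm::upper_bound} and Lemma~\ref{lem::J_star_bound}, and this result just records the resulting bias--variance trade-off, paralleling the classical $N^{-2L/(2L+K)}$ nonparametric MSE rate with $N$ replaced by the effective sample size $N/\big((n\vee p)\log(n+p)\big)$. The only point needing a line of care is that Theorem~\ref{thm::upper_bound}, via Lemma~\ref{lem::approx}, presumes the approximation matrix has rank $J^*(\epsilon)\le\min(n,p)$; one checks $\epsilon=\epsilon^\star$ is admissible because $J^*(\epsilon^\star)\asymp\big(N/((n\vee p)\log(n+p))\big)^{K/(2L+K)}\le\big((n\wedge p)/\log(n+p)\big)^{K/(2L+K)}\le n\wedge p$, using $N\le np$ in the regime of interest and $K/(2L+K)<1$. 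A second, purely cosmetic, issue is that $J^*(\epsilon)$ is integer-valued, so one takes $\epsilon$ to be the smallest value attaining the rounded-up rank; this perturbs constants only and leaves the rate unchanged.
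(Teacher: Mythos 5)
Your proposal is correct and follows essentially the same route as the paper: substitute $J^*(\epsilon)=O(\epsilon^{-K/L})$ from Lemma~\ref{lem::J_star_bound} into the bound of Theorem~\ref{thm::upper_bound}, optimize over $\epsilon$ (the paper balances the two terms directly, you differentiate, yielding the same $\epsilon\asymp\big((n\vee p)\log(n+p)/N\big)^{L/(2L+K)}$ up to constants), and convert the high-probability bound into the $O_P$ statement. Your added check that $J^*(\epsilon^\star)\le n\wedge p$ and the integer-rounding remark are harmless refinements beyond what the paper records.
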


		The proof is given in Appendix~\ref{proof_upper_bound2}. This upper bound of the convergence rate of the MSE of $\widehat M$ is only based on the dimensions $n$ and $p$ of matrix $M$, the total number of observations $N$, as well as the degree of smoothness $L$ and dimension of the embedding $K$. Previous work that assumed $M$ was low-rank generally gave a rate of the form $N^{-1}\operatorname{(n\vee p) rank(M)\log(n+p)}$ \citep{bach2008consistency, klopp2014noisy, van2016estimation}. In contrast, our upper bound does not rely on the rank of $M$. Instead, the role of $\operatorname{rank}(M)$ is replaced by $L$, and $K$. This result reaffirms that the standard matrix completion estimator based on nuclear norm minimization is consistent for matrices with low-dimensional non-linear structure. Perhaps more importantly, it also shows how the convergence rate depends on the degree of smoothness, and dimension of the manifold. This can be seen in the exponent on the RHS of \eqref{eq::upper_bound}: $2L/(2L+K)$. Increasing the degree of smoothness moves this exponent towards $1$; increasing the dimension moves the exponent towards $0$. This is analogous to more standard non-parametric regression problems in smooth hypothesis spaces where the minimax convergence rate for MSE looks analagous \citep{tsybakov2008introduction}.

		\section{Minimax Lower Bound}\label{sec::minimax}

		In this section, we use information-theoretical methods to establish a lower bound on the estimation error for completing \emph{non-linearly embeddable} matrices with uniformly sampled at random entries when the latent embedding $\bm\Theta$ is $K$-dimensional and satisfies Condition~\ref{cond::bound_derivative}. The rate we find in the lower bound matches the rate obtained by nuclear norm penalization in Theorem~\ref{thm::upper_bound2} up to a log-term. Thus our upper bound is sharp (up to a logarithmic factor), and, the nuclear-norm penalization based estimator given in \eqref{eq::est_proc} is rate-optimal (up to polylog) for this problem. 

		
	    To derive the lower bound, we consider the underlying matrices $M \in \mathcal{M}(L,\gamma,K)$ as defined in \eqref{eq::mat_class}, i.e., matrices that live in $L$-th order smooth, $K$ dimensional manifolds. Let $\mathbb{P}_M$ denote the probability distribution of the observations $\{(y_t,X_t)\}_{t=1}^N$ generated by model \eqref{eq::data_model} with $\operatorname{E}(y_t|X_t) = \langle X_t, M\rangle$. We give a minimax lower bound of the $\|\cdot\|_F^2$-risk for estimating $M$ in the following result.


		\begin{theorem}\label{thm::lower_bound}
			For any given $L\geq 1$, $\gamma>0$ and $K\geq 1$, let $\kappa:=n/p$. Then, for some constant $A>0$ that depends on $K, L, \gamma, \sigma^2$ and $\kappa$, the minimax risk for estimating $M$ satisfies
			\begin{equation}\label{eq::lower_bound}
				\inf_{\hat M}\sup_{M \in \mathcal{M}(L,\gamma,K)} \mathbb{P}_M\left(\frac{1}{np}\left\|\widehat M - M\right\|_F^2 > A \left(\frac{n\vee p}{N}\right)^{\frac{2L}{2L+K}} \right) \ge 1/2,
			\end{equation}
			when $c_0^{-\frac{2L+K}{K}}(n\vee p) \le N \le c_0^{-\frac{2L+K}{K}} 0.48^{2L+K} (n\vee p)n^{\frac{2L+K}{K}}$ for some constant $c_0$ which depends on $K, L, \gamma, \sigma^2$ and $\kappa$.
		\end{theorem}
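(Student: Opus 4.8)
The plan is to establish \eqref{eq::lower_bound} by the standard reduction of estimation to multiple hypothesis testing, using a version of Fano's inequality (e.g.\ Theorem~2.5 of \citet{tsybakov2008introduction}). Concretely, I would construct a finite family $\{M^{(0)},M^{(1)},\ldots,M^{(T)}\}\subset\mathcal{M}(L,\gamma,K)$ that is (i) well separated in the normalized Frobenius metric, $\frac{1}{np}\|M^{(a)}-M^{(b)}\|_F^2\ge 4A\left(\frac{n\vee p}{N}\right)^{2L/(2L+K)}$ for all $a\ne b$, and (ii) statistically indistinguishable in the sense that $\frac{1}{T}\sum_{a=1}^T\mathrm{KL}(\mathbb{P}_{M^{(a)}}\,\|\,\mathbb{P}_{M^{(0)}})\le\alpha\log T$ for a small absolute constant $\alpha$ (say $\alpha\le 1/16$). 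Given such a family, Fano's lemma yields $\inf_{\widehat M}\max_a\mathbb{P}_{M^{(a)}}\big(\frac1{np}\|\widehat M-M^{(a)}\|_F^2> A(\cdot)\big)\ge 1/2$, which implies the claim. A useful simplification is that, since the observations are i.i.d.\ and each $X_t$ is uniform on $\mathcal{X}$, the Gaussian model gives the clean identity $\mathrm{KL}(\mathbb{P}_{M}\,\|\,\mathbb{P}_{M'})=\frac{N}{2\sigma^2 np}\|M-M'\|_F^2$, so controlling the KL divergence reduces to controlling the very Frobenius distance that governs the separation.

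The hard family is built from localized smooth bumps at a resolution $m$ to be chosen. Fix a $C^\infty$ bump $\phi$ supported in $(0,1)^K$ with $\|\phi\|_\infty=1$; partition $[0,1]^K$ into $m^K$ congruent sub-cubes $Q_1,\dots,Q_{m^K}$ of side $1/m$ and set $\phi_\ell(x)=c_\gamma m^{-L}\phi\big(m(x-x_{Q_\ell})\big)$, where $c_\gamma$ is small enough that every mixed partial of order $L$ of each $\phi_\ell$ is bounded by $\gamma$ (this is exactly why the amplitude must scale like $m^{-L}$). Because the $\phi_\ell$ have disjoint supports, any function $f^\omega_j:=\sum_{\ell}\omega_{j\ell}\,\phi_\ell$ with $\omega=(\omega_{j\ell})\in\{0,1\}^{p\times m^K}$ lies in $\mathcal{F}(L,\gamma,K)$. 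I would then distribute the row indices $1,\dots,n$ among the $m^K$ cubes (at least one per cube, which forces $m^K\le n$), place $\bm\theta_{i,\cdot}$ at the center of the assigned cube, and define $M^\omega$ by $m^\omega_{ij}=f^\omega_j(\bm\theta_{i,\cdot})=c_\gamma m^{-L}\omega_{j,\ell(i)}$; this is $\mathcal{M}(L,\gamma,K)$-embeddable by construction. Applying the Varshamov--Gilbert bound to $\{0,1\}^{pm^K}$ gives a subset $\Omega$ with $\log|\Omega|\ge\frac{pm^K}{8}\log 2$ and pairwise Hamming distance $\ge pm^K/8$. Since each flipped coordinate $\omega_{j\ell}$ perturbs the $\asymp n/m^K$ rows sitting in cube $Q_\ell$ by $c_\gamma m^{-L}$, one gets $c'\,np\,m^{-2L}\le\|M^\omega-M^{\omega'}\|_F^2\le C'\,np\,m^{-2L}$ for distinct $\omega,\omega'\in\Omega$, so the normalized Frobenius distance — and hence, via the identity above, each KL divergence — is of order $m^{-2L}$ up to constants.

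It then remains to choose $m\asymp(c_0N/(n\vee p))^{1/(2L+K)}$: this makes the separation $\asymp\big((n\vee p)/N\big)^{2L/(2L+K)}$, while the average KL divergence is $\lesssim\frac{N}{\sigma^2}m^{-2L}\asymp p\,m^K\asymp\log|\Omega|$, so a sufficiently small $c_0$ (depending on $\sigma^2,\gamma,K,L$) buys the Fano condition $\mathrm{KL}\le\alpha\log|\Omega|$; the two bracketing conditions $m\ge 1$ and $m^K\le n$ then translate precisely into the stated window $c_0^{-(2L+K)/K}(n\vee p)\le N\le c_0^{-(2L+K)/K}\,0.48^{2L+K}(n\vee p)\,n^{(2L+K)/K}$ once the integer rounding of $m$ is absorbed into the $0.48$ factor. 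The one genuinely delicate point is the interplay between $n$ and $p$: the construction above perturbs the $p$ column-functions and caps the embedding resolution at $m^K\le n$, which is the right trade-off when $p\ge n$; for $n>p$ one must instead put the free bits on the side of size $n\vee p=n$, i.e.\ argue by a symmetric/transpose-type construction (after checking that the relevant sub-family — matrices $m_{ij}=h(\bm\theta_{i,\cdot},\bm\varphi_{j,\cdot})$ with $h$ smooth in both arguments — sits inside $\mathcal{M}(L,\gamma,K)$ and behaves well under transposition), so that the packing has $\asymp(n\vee p)\,m^K$ elements in either regime. Keeping all constants compatible with the stated range across this case split is where I expect the bulk of the bookkeeping to lie; the remainder is the routine bump-function estimate, Varshamov--Gilbert, and Fano computation outlined above.
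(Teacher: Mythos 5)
Your proposal is correct and follows essentially the same route as the paper's proof: disjointly supported smooth bumps with amplitude scaled by $m^{-L}$ so membership in $\mathcal{F}(L,\gamma,K)$ holds, a Varshamov--Gilbert packing over the $p\cdot m^K$ binary coefficients, the exact Gaussian identity $\mathrm{KL}(\mathbb{P}_M\|\mathbb{P}_{M'})=\frac{N}{2\sigma^2 np}\|M-M'\|_F^2$, and Fano with the resolution $m^{2L+K}\asymp N/(n\vee p)$, where $m\ge 1$ and $m^K\lesssim n$ produce exactly the stated window on $N$ (the paper uses an equispaced grid plus a Riemann-sum bound, whence its $0.48$ factor, rather than your cube-center placement, but this is cosmetic). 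The one place you go astray is the final bookkeeping: no transpose-type construction or case split on $n$ versus $p$ is needed, because the Fano residual is of order $(n\vee p)/(\sigma^2 c_0\, p)=\max(\kappa,1)/(\sigma^2 c_0)$ and the theorem explicitly allows $c_0$ and $A$ to depend on $\kappa=n/p$, which is how the paper handles it --- and note that in your parametrization $m\asymp(c_0 N/(n\vee p))^{1/(2L+K)}$ this forces $c_0$ to be chosen sufficiently \emph{large} (and $\kappa$-dependent), not small.
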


		The proof is given in the Appendix~\ref{proof_lower_bound}. Comparing Theorem \ref{thm::lower_bound} to Theorem \ref{thm::upper_bound2}, we see that the lower bound matches the upper bound \eqref{eq::upper_bound} up to a logarithmic factor. This shows that the estimator given by \eqref{eq::est_proc} is actually an optimal estimator (up to a log term) for this non-linear low-dimensional matrix completion regime.
		
		We note that the requirement $N = O\left((n\vee p)n^{\frac{2L+K}{K}}\right)$ in Theorem \ref{thm::lower_bound} is a bit unusal. It comes from a technical constraint in our proof, required to construct a suitably large packing set. This may just be an artifact of our proof technique, and not innate to the problem.  Recall that the upper bound holds as long as $N \ge (n\vee p) \log^2(n+p)$, so there is a large regime where the assumption required for our upper and lower bounds overlap.

		.


		\section{Simulation Study}\label{sec::sims}

		In this section, we empirically evaluate the effectiveness of matrix completion using the soft-thresholding estimator $\widehat M$ in \eqref{eq::hat_M} for noisy incomplete matrices which are generated from low-dimensional non-linear embeddings. (These matrices are full rank, even though they are generated from low-dimensional non-linear embeddings). Here, we only show the case of univariate embedding ($K=1$) and aim to empirically evaluate how the Frobenius error $\frac{1}{np}\left\|\widehat M-M\right\|_F^2$ changes with the dimension ($n$) when $n=p$. We examine scenarios where the non-linear embeddings are of different orders of smoothness.

		The underlying matrices are generated as described in \eqref{eq::mat_gen}: $m_{ij} = f_j(\bm\theta_{i,\cdot})$ for $i=1,\ldots,n$ and $j = i,\ldots,p$. In particular, to make sure that Conditions \ref{cond::approx} and \ref{cond::bound_derivative} are satisfied, we generate $f_j$ as
		\[
		f_j(x) = \sum_{b=1}^{\infty}\beta_b\psi_b(x),
		\]
		where $\psi_b(x)$ are orthonormal bases in $L_2[0,1]$ defined by:
		\[
		\begin{aligned}
			\psi_1(x) & = 1,\\
			\psi_{2b}(x) & = \sqrt{2}\cos(2\pi b x),\\
			\psi_{2b+1}(x) & = \sqrt{2}\sin(2\pi b x).
		\end{aligned}
		\] 
		Meanwhile, to set up the order of smoothness $L$ and make sure that $\beta_b\psi_b(x)$ vanishes with $b$, we sample the coefficients $\beta_b$ from a uniform distribution:
		\[
		\beta_b \sim_{i.i.d} U\left[-b^{-(L+1)}, b^{-(L+1)}\right], \quad b=1,2,\ldots .
		\]
		In this way, we can guarantee that $\sum_{b=1}^\infty b^{2L}\beta_j^2 < \infty$. Thus, $f_j$ is a function whose $L$th order derivative is $O_p(1)$.

		In this simulation, for computational reasons, we actually use only the first $100$ basis vectors $f_j(x) = \sum_{b=1}^{100}\beta_b\psi_b(x)$. The underlying embeddings $\bm\theta_{i,\cdot} \in \mathbb{R}$ are also i.i.d. sampled from a uniform distribution $U(0,1)$ for $i=1,\ldots,n$. We set the missingness rate to $\nu = 0.3$: The total number of observed entries is $N=(1-\nu) np$. The observed entries are $y_t = \langle X_t, M\rangle + \xi_t$, where $X_t$ are uniformly sampled from $\mathcal{X}$ and the error terms are independently Gaussian distributed $\xi_t \sim_{i.i.d.} N(0,1)$. We generate random data sets $\{(y_t,X_t)\}_{t=1}^N$ of size $n \in \{500, 1000, 2000, 3000, 5000\}$ and estimate $M$. We run 100 simulations for each size. To select $\lambda$, instead of using cross-validation, here we consider an oracle procedure: For each simulation, we estimate the MSE for a set of $\lambda$ values and select the $\lambda$ that minimizes the MSE. We report this MSE of the estimated matrix $\widehat{M}$ and the corresponding $\lambda$.

		\begin{figure}[t!]
			\centering
			\includegraphics[width=\textwidth]{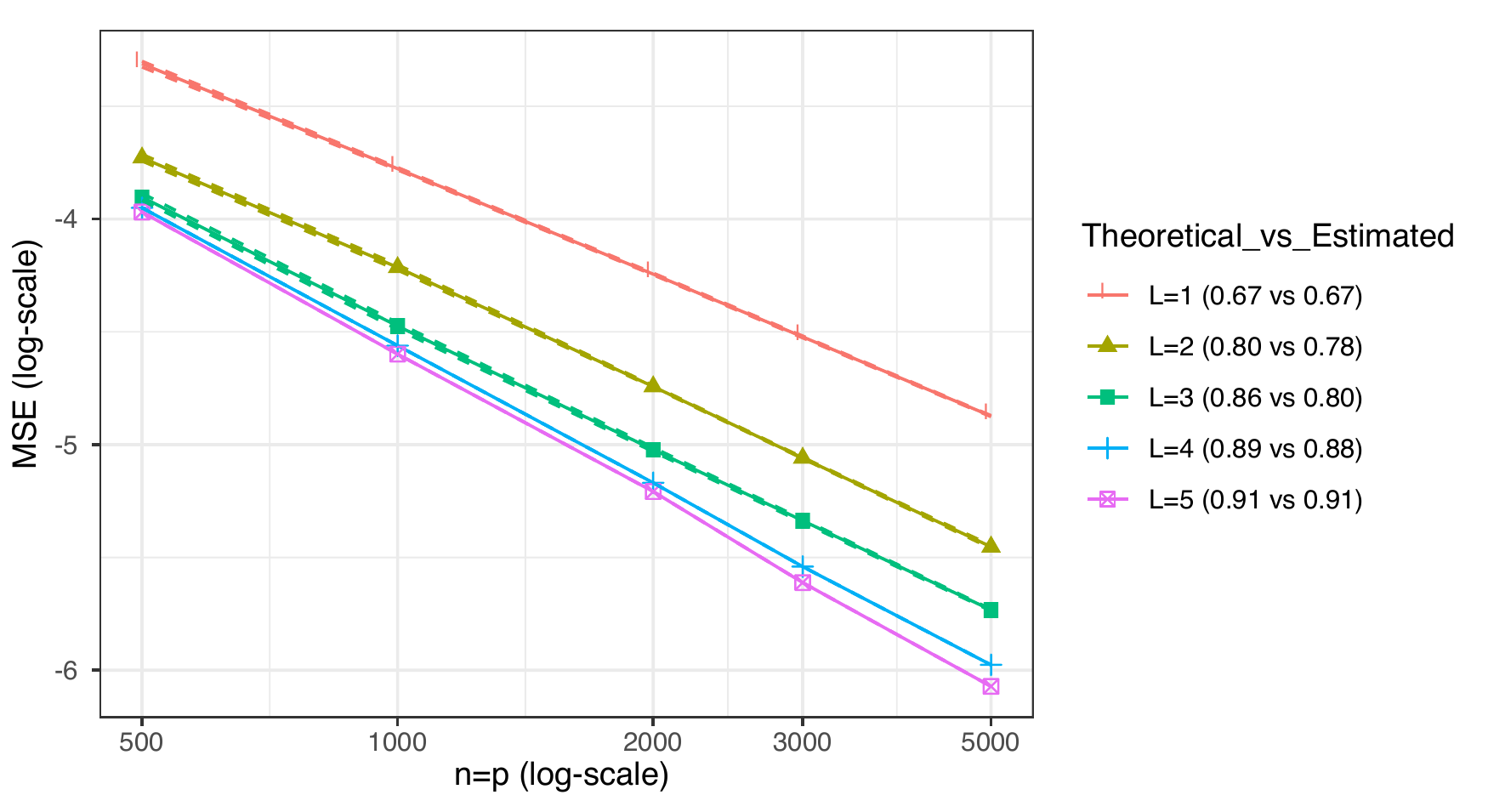}
			\caption{Theoretical rate vs. empirical rate (in log scale) of the mean squared errors as a function of sample size. The underlying matrices $M$ are generated by $f$ with different orders ($L$) of smoothness. The low-rank embedding is one-dimensional ($K=1$). We regress log(MSE) on log($n$), and compare the theoretical slopes (left) with the empirical slopes (right). For each smoothness level, $L$, we also obtain the 95\% confidence regions using bootstrap (dash lines).
			}\label{fig::simulation}
		\end{figure}
		
		Figure \ref{fig::simulation} shows the results of estimating $M$ generated by non-linear embeddings with different orders of smoothness, $L$. Since $N = (1-\nu)np$, the convergence rate in \eqref{eq::upper_bound} reduces to $O_P\left(\left[\log(2n)/n\right]^{\frac{2L}{2L+K}}\right)$. The log term inside is negligible as $n$ increases. Hence, if we regress log(MSE) on $\log(n)$, the absolute value of slope should be roughly about $2L/(2L+1)$ ($K=1$ in this simulation). We increase the order of smoothness of $f$ from $L=1$ to $L=5$. For these values of $L$, the expected absolute value of the slope should be 0.67, 0.80, 0.86, 0.89, and 0.91. The rates from our simulations are respectively 0.67, 0.78, 0.80, 0.88, and 0.91. There is generally strong agreement between theoretical and empirical results except for the setting of $L=3$. We hypothesize that this is due to finite sample issues.

		\section{Discussion}\label{sec::discussion}

		Nuclear-norm based matrix completion methods were originally developed for scenarios where the underlying mean matrix has low rank. In this manuscript, we present theoretical results to explain the effectiveness of matrix completion in applications where the underlying mean matrix is not low rank, but instead lives in a low-dimensional smooth manifold.
		
		Our results show that, in such scenarios, nuclear-norm regularization can still result in a procedure that is minimax rate optimal (up to a log factor) for recovering the underlying mean matrix. In particular, we give upper bounds on the rate of convergence as a function of the number of rows, columns, and observed entries in the matrix, as well as the smoothness, and dimension of the embeddings. We additionally give matching minimax lower bounds (up to a logarithmic factor) for this problem. These bounds appear analogous to the minimax rate in the case of standard non-parametric regression.
		
		Our theoretical results relate the error bounds to the smoothness and dimension of the non-linear embedding; however, the technical proof does not provide a way to figure out the explicit form of the hidden embeddings, which may be interesting in practice, e.g., for dimension reduction.  Modifying the original matrix completion method in order to estimate the hidden embeddings may be an important direction of future research.

\newpage

\appendix
\raggedbottom\sloppy

\fontsize{12}{14pt plus.8pt minus .6pt}\selectfont \vspace{0.8pc}
\centerline{\large\bf Supplementary Materials: On the Optimality of}
\vspace{2pt} 
\centerline{\large\bf Nuclear-norm-based Matrix Completion for Problems }
\vspace{2pt} 
\centerline{\large\bf with Smooth Non-linear Structure}

\def\thefigure{\arabic{figure}}
\def\thetable{\arabic{table}}

\renewcommand{\theequation}{\thesection.\arabic{equation}}

\fontsize{12}{14pt plus.8pt minus .6pt}\selectfont

	\section{Proof of Lemma~\ref{lem::approx}} \label{proof_lem_approx}

We begin by giving a proof of Lemma~\ref{lem::approx}:

\begin{proof}
	
	Recall that $M$ is $\mathcal{F}$-embeddable and  $\mathcal{F}$ satisfies Condition \ref{cond::approx}. Thus, the entries of $M$ are generated by $m_{ij} = f_j(\bm{\theta}_{i,\cdot})$.
	Consider arbitrary $\epsilon>0$. Then there is some fixed $C_0>0$, and a collection of functions $\mathcal{F}^{*}_{\epsilon}= \left\{\tilde\psi_1, \tilde\psi_2, \ldots, \tilde\psi_{J^*(\epsilon)}\right\} \subset \mathcal{F}$ that give the finite set of minimal cardinality $J^{*}(\epsilon)$, with the property that
	$
	\operatorname{max}_{f\in\mathcal{F}}\operatorname{min}_{\left\|\beta\right\|_2^2 \leq C_0} \left\|f - \sum_{l=1}^{J^*(\epsilon)}\beta_l \tilde\psi_l\right\|_{\infty} \leq \epsilon.
	$
	and $\|\tilde{\psi}_l\|_\infty \le C_0$.
	For any given $f\in\mathcal{F}$, let 
	\begin{equation}\label{eq::beta_f}
		\beta^{\epsilon}(f) = 
		\operatorname{argmin}_{\beta} \left\|f - \sum_{\tilde\psi_l \in \mathcal{F}^{*}_{\epsilon}}\beta_l \tilde\psi_l \right\|_{\infty}.
	\end{equation}

	This implies that we can approximate $M$ with a low-rank matrix $M^\epsilon$, with entries given by
	\begin{equation}\label{eq::approx}
		m^{\epsilon}_{ij} \gets \sum_{\tilde\psi_l \in \mathcal{F}^{*}_{\epsilon}}\tilde\psi_l\left(\bm\theta_{i,\cdot}\right) \cdot \beta^{\epsilon}_l(f_j),
	\end{equation}
	such that $|m_{ij} - m^\epsilon_{ij}| \le \epsilon$ for $i=1,\ldots,n$, $j=1,\ldots,p$. Now, let $\Psi$ denote the matrix with $\Psi_{il} = \tilde\psi_l\left(\bm\theta_{i,\cdot}\right)$ and $B$ denote the matrix with entries $B_{lj} = \beta^{\epsilon}_l(f_j)$. Then, the approximation matrix can be compactly written as 
	\[M^{\epsilon}= \Psi B,\]
	with $\Psi \in \mathbb{R}^{n\times J^*(\epsilon)}$ and $B\in\mathbb{R}^{J^*(\epsilon) \times p}$. Thus,  $\operatorname{rank}(M^{\epsilon}) = J^*(\epsilon) \le \min(n,p)$ and
	\[
	\left\|M^{\epsilon} - M\right\|_{\infty} \leq \epsilon.
	\]
	
	Finally, using a variational form of the nuclear norm \citep{srebro2005rank}, we have
	\[
	\frac{1}{\sqrt{np}}\left\|M^{\epsilon}\right\|_{*} = \frac{1}{2}\,\,\min_{UV^{\top} = M^{\epsilon}} \left( \frac{1}{n}\left\|U\right\|_F^2 +  \frac{1}{p}\left\|V\right\|_F^2 \right).
	\]
	From the statement above \eqref{eq::beta_f}, we know that $\|\Psi\|_\infty$ and $\|B\|_\infty$ are both bounded by $C_0$. 
	Thus we have that
	\[
	\frac{1}{\sqrt{np}}\left\|M^{\epsilon}\right\|_{*} \leq \frac{1}{2}\left(\frac{1}{n} \left\|\Psi\right\|_F^2 + \frac{1}{p}\left\|B\right\|_F^2 \right) \leq C_0^2 J^*(\epsilon).
	\]
	Noting that $C_0^2$ is a constant independent of $\epsilon$ gives us our result.
\end{proof}

\section{Deriving the Consistency}\label{proof_upper_bound}

In this section, we shall derive the consistency of our estimator $\widehat{M}$. Recall that $\{(y_t, X_t)\}_{t=1}^N$ are generated by
\begin{equation}\label{eq::supp_data_model}
	y_t = \langle X_t, M\rangle + \xi_t,
\end{equation}
where $\xi_t$ are i.i.d. random errors distributed $N(0,\sigma^2)$, and $M$ is a $n\times p$ matrix. The estimator we consider is defined by
\begin{equation}\label{eq::est_M}
	\begin{aligned}
		\widehat M &\leftarrow  \operatorname{argmin}_{M\in \mathbb R^{n\times p}} \left\{\frac{1}{np}\|M\|_F^2 - \left\langle \frac{2}{N} \sum_{t=1}^{N} y_t X_t, M \right\rangle + \lambda \|M\|_* \right\}\\
		&\equiv \operatorname{argmin}_{M\in \mathbb R^{n\times p}} {L_N(M)}
	\end{aligned}
\end{equation}

We first introduce two technical lemmas, which will play a key role in showing the convergence rate. Proving these lemmas will entail most of the work required for proving this theorem. In Lemma~\ref{lem::upper_bound0}, we derive a deterministic upper bound for the estimation error (under a stochastic condition) as a function of the regularization parameter $\lambda$, when $\lambda$ is sufficiently large (in this Lemma, ``sufficiently large'' is left as a stochastic constraint). In particular, we show that the risk can be decomposed into a misspecification error and a prediction error. Then, in Lemma~\ref{lem::op_bound}, we identify a deterministic value for $\lambda$ such that, with high probability, the condition in Lemma~\ref{lem::upper_bound0} will hold. More specifically we give probabilistic bounds for the operator norm of the stochastic error term in our generative model. We can then combine these to obtain the general oracle inequality in Theorem~\ref{thm::upper_bound}.

Before continuing, we give some additional notation:  For any matrix $Z$, we denote $\|Z\|_{op} = \Lambda_{\max}(Z)$,  where $\Lambda_{\max}^2(Z) = \Lambda_{\max}(Z^TZ)$ is the largest singular value of $Z^TZ$, also known as the operator-norm.


\begin{lemma}\label{lem::upper_bound0}
	Suppose we observe $\{(y_t, X_t)\}_{t=1}^N$ generated by \eqref{eq::supp_data_model}, where $X_t$ are i.i.d. uniformly sampled from $\mathcal{X}$. Further,  assume the underlying true matrix $M \in \mathbb{R}^{n\times p}$ is $\mathcal{F}$-embeddable with Condition \ref{cond::approx} satisfied.   Let $\Delta = N^{-1}\sum_{t=1}^{N}[y_tX_t - \operatorname{E}(y_tX_t)]$. If $\lambda \ge 2\|\Delta\|_{op}$, then 
	\begin{equation}\label{eq::upper_bound0}
		\frac{1}{np}\|\hat{M}-M\|_F^2\le \epsilon^2 + \left(\frac{1+\sqrt{2}}{2}\right)^2 J^*(\epsilon) \lambda^2 np
	\end{equation}
	holds for any $\epsilon>0$. Recall that $J^*(\epsilon)$ is the minimal rank of an approximation matrix $M^\epsilon$ with $\|M^\epsilon-M\|_\infty\ < \epsilon$.
\end{lemma}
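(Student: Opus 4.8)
The plan is to run the classical oracle-inequality argument for nuclear-norm penalized estimators, in the spirit of \citet{koltchinskii2011}, taking the low-rank approximation matrix from Lemma~\ref{lem::approx} as the competitor. Fix $\epsilon>0$ and let $M^\epsilon$ be the matrix from Lemma~\ref{lem::approx}, so that $\operatorname{rank}(M^\epsilon)=J^*(\epsilon)$ and $\|M^\epsilon-M\|_\infty\le\epsilon$. First I would rewrite the objective: since $\operatorname{E}(y_tX_t)=\frac{1}{np}M$, we have $\frac{1}{N}\sum_{t}y_tX_t=\frac{1}{np}M+\Delta$, and completing the square gives
\[
L_N(A)=\frac{1}{np}\|A-M\|_F^2-2\langle\Delta,A\rangle+\lambda\|A\|_*-\frac{1}{np}\|M\|_F^2 .
\]
Because $\widehat M$ minimizes $L_N$, we have $L_N(\widehat M)\le L_N(M^\epsilon)$; plugging in $A=\widehat M$ and $A=M^\epsilon$ and cancelling the constant yields the basic inequality
\[
\frac{1}{np}\|\widehat M-M\|_F^2\le\frac{1}{np}\|M^\epsilon-M\|_F^2+2\langle\Delta,\widehat M-M^\epsilon\rangle+\lambda\big(\|M^\epsilon\|_*-\|\widehat M\|_*\big).
\]

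The first term is the misspecification error and is at most $\epsilon^2$, since every entry of $M^\epsilon-M$ has absolute value at most $\epsilon$, so $\frac{1}{np}\|M^\epsilon-M\|_F^2\le\|M^\epsilon-M\|_\infty^2\le\epsilon^2$. The remaining two terms are handled via the decomposability of the nuclear norm. Let $P_L,P_R$ be the orthogonal projections onto the column and row spaces of $M^\epsilon$ (each of rank $J^*(\epsilon)$), put $\mathcal P^\perp(Z)=(I-P_L)Z(I-P_R)$ and $\mathcal P(Z)=Z-\mathcal P^\perp(Z)$, and write $H=\widehat M-M^\epsilon$. I would use three standard facts: (i) $\|M^\epsilon+\mathcal P^\perp(H)\|_*=\|M^\epsilon\|_*+\|\mathcal P^\perp(H)\|_*$, since $M^\epsilon$ and $\mathcal P^\perp(H)$ act on orthogonal row/column spaces, hence by the triangle inequality $\|M^\epsilon\|_*-\|\widehat M\|_*\le\|\mathcal P(H)\|_*-\|\mathcal P^\perp(H)\|_*$; (ii) $\operatorname{rank}(\mathcal P(H))\le 2J^*(\epsilon)$; and (iii) trace duality, $|\langle\Delta,Z\rangle|\le\|\Delta\|_{op}\|Z\|_*$. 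Splitting $\langle\Delta,H\rangle=\langle\Delta,\mathcal P(H)\rangle+\langle\Delta,\mathcal P^\perp(H)\rangle$ and invoking the hypothesis $\lambda\ge2\|\Delta\|_{op}$, the $\mathcal P^\perp$ contributions cancel, leaving $2\langle\Delta,H\rangle+\lambda(\|M^\epsilon\|_*-\|\widehat M\|_*)\le2\lambda\|\mathcal P(H)\|_*$. Thus $\frac{1}{np}\|\widehat M-M\|_F^2\le\epsilon^2+2\lambda\|\mathcal P(H)\|_*$.

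To finish, I would convert the nuclear norm to a Frobenius norm using the rank bound (ii): $\|\mathcal P(H)\|_*\le\sqrt{2J^*(\epsilon)}\,\|\mathcal P(H)\|_F\le\sqrt{2J^*(\epsilon)}\,\|H\|_F$ (the last step because $\mathcal P$ is an orthogonal projection in the Frobenius inner product), and then bound $\|H\|_F=\|\widehat M-M^\epsilon\|_F\le\|\widehat M-M\|_F+\|M-M^\epsilon\|_F\le\|\widehat M-M\|_F+\sqrt{np}\,\epsilon$. Setting $t=\|\widehat M-M\|_F/\sqrt{np}$, this produces a quadratic inequality of the shape $t^2\le\epsilon^2+2\sqrt2\,\lambda\sqrt{J^*(\epsilon)np}\,(t+\epsilon)$; solving for $t$ and rearranging, with the factor $\big(\frac{1+\sqrt2}{2}\big)^2$ emerging from the discriminant, gives $\frac{1}{np}\|\widehat M-M\|_F^2\le\epsilon^2+\big(\frac{1+\sqrt2}{2}\big)^2 J^*(\epsilon)\lambda^2 np$, as claimed. (If $H=0$ the bound is immediate, so we may assume $\|H\|_F>0$ when dividing.)

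I expect the crux of the argument to be the decomposability step: choosing the projections $\mathcal P,\mathcal P^\perp$ relative to $M^\epsilon$, verifying the exact additivity of the nuclear norm in (i) and the rank bound in (ii), and then checking that $\lambda\ge2\|\Delta\|_{op}$ is precisely the slack needed for the $\mathcal P^\perp$ contributions to be absorbed; tidying the constants through the final quadratic is the other place care is required. The magnitude of $\|\Delta\|_{op}$ itself—i.e., how large $\lambda$ must actually be—does not enter here; that is the content of Lemma~\ref{lem::op_bound}, and only the deterministic hypothesis $\lambda\ge2\|\Delta\|_{op}$ is used.
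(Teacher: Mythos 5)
Your argument is clean up to the last display, and the route you take (the basic inequality $L_N(\widehat M)\le L_N(M^\epsilon)$ plus decomposability of the nuclear norm and trace duality) is a legitimate alternative to the paper's proof, which instead works from the first-order optimality condition $\bm{0}\in\partial L_N(\widehat M)$ and the monotonicity of the subdifferential. But the final quantitative step fails: from $t^2\le \epsilon^2+2\sqrt2\,a\,(t+\epsilon)$ with $a=\lambda\sqrt{J^*(\epsilon)np}$, completing the square gives $(t-\sqrt2 a)^2\le(\epsilon+\sqrt2 a)^2$, i.e.\ $t\le \epsilon+2\sqrt2\,a$, and hence $t^2\le \epsilon^2+4\sqrt2\,a\epsilon+8a^2$. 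The factor $\left(\frac{1+\sqrt2}{2}\right)^2\approx1.46$ does not ``emerge from the discriminant'': what you actually get is a coefficient $8$ on $J^*(\epsilon)\lambda^2np$ plus a cross term $4\sqrt2\,\lambda\sqrt{J^*(\epsilon)np}\,\epsilon$, which is the right rate but is strictly weaker than the inequality \eqref{eq::upper_bound0} you are asked to prove (in particular the coefficient of $\epsilon^2$ is no longer $1$). So as written the proposal proves a constant-degraded version of the lemma, not the lemma.

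The missing ingredient is the curvature of $L_N$ that your basic-inequality step throws away. The paper keeps the term $\frac{1}{np}\|\widehat M-M^\epsilon\|_F^2$ on the left-hand side (it appears automatically when one expands $\langle \widehat A,\widehat M-M^\epsilon\rangle=0$, via the identity $2\langle\widehat M-M,\widehat M-M^\epsilon\rangle=\|\widehat M-M\|_F^2-\|M^\epsilon-M\|_F^2+\|\widehat M-M^\epsilon\|_F^2$), and bounds the penalty and stochastic terms directly by $(1+\sqrt2)\lambda\sqrt{J^*(\epsilon)}\,\|\widehat M-M^\epsilon\|_F$, using $\|P_U(\cdot)P_V\|_*\le\sqrt{J^*(\epsilon)}\|\cdot\|_F$ for the penalty part and $\|\mathcal P_{M^\epsilon}(\Delta)\|_F\le\sqrt{2J^*(\epsilon)}\|\Delta\|_{op}$ for the noise part. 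Maximizing the resulting concave quadratic in $\|\widehat M-M^\epsilon\|_F$ then yields exactly $\left(\frac{1+\sqrt2}{2}\right)^2J^*(\epsilon)\lambda^2np$, with no cross term and no appeal to $\|M-M^\epsilon\|_F\le\sqrt{np}\,\epsilon$ inside the noise bound. You could partially repair your version by invoking the $\frac{2}{np}$-strong convexity of $L_N$ (so $L_N(M^\epsilon)\ge L_N(\widehat M)+\frac{1}{np}\|H\|_F^2$) and then optimizing over $\|H\|_F$, but because you pair $\lambda$ with $\|\mathcal P(H)\|_*$ and the rank bound $2J^*(\epsilon)$, this still gives coefficient $2$ rather than $\left(\frac{1+\sqrt2}{2}\right)^2$; matching the stated constant requires splitting the penalty and noise contributions the way the paper does. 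Your weaker bound would still suffice, up to constants, for the rates in Theorems~\ref{thm::upper_bound} and \ref{thm::upper_bound2}, but it does not establish the lemma as stated.
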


\begin{proof}
	
	The proof of this lemma is based on the strong convexity of the loss function $L_N(M)$.

	Consider the the subdifferential of $L_N(M)$, which is the set of matrices of the following form:
	\begin{equation}\label{eq::proof_lem_S1_eq1}
		\partial L_N(M)= \left\{  \frac{2}{np} M - \frac{2}{N}\sum_{t=1}^{N} y_t X_t + \lambda  B, ~~B \in \partial \|M\|_* \right\}.
	\end{equation}
	Thus, the following representation holds for $\widehat{A} \in \partial L_N(\widehat M)$
	\[
	\widehat{A} = \frac{2}{np} \widehat M - \frac{2}{N}\sum_{t=1}^{N} y_t X_t + \lambda \widehat B,
	\]
	for some $\widehat{B}\in \partial \|\widehat M\|_*$. Since $M \mapsto L_N(M)$ is strictly convex, $\widehat{M}$ defined in \eqref{eq::est_M} is the unique minimizer of $L_N(M)$. This implies, $\bm{0} \in \partial L_N(\widehat{M})$. Hence, there exists $\widehat{B}\in \partial \|\widehat M\|_*$ such that $\widehat{A}=\bm{0}$, and thus
	\begin{equation}\label{eq::proof_lem_S1_eq2}
		\langle \widehat{A}, \widehat{M}-M^\epsilon \rangle = \langle \bm{0}, \widehat{M}-M^\epsilon \rangle  = 0.
	\end{equation}
	It further follows that
	\begin{equation}\label{eq::proof_lem_S1_eq3}
		\begin{aligned}
			& \langle \widehat{A}, \widehat M-M^\epsilon \rangle \\
			& = \frac{2}{np}\langle\widehat M, \widehat M-M^\epsilon\rangle - \frac{2}{N}\sum_{t=1}^{N} \langle y_t X_t, \widehat M- M^\epsilon\rangle + \lambda \langle \widehat B, \widehat M - M^\epsilon\rangle = 0.
		\end{aligned}
	\end{equation}

	$M^\epsilon\in \mathbb{R}^{n\times p}$ is the approximation matrix with $\operatorname{rank}(M^\epsilon) = J^*(\epsilon)$. So, it has spectral representation $M^\epsilon = \sum_{j=1}^{J^*(\epsilon)}\sigma_ j u_j v_j^T$ where $u_j\in\mathbb{R}^n$  and $v_j \in\mathbb{R}^{p}$, $j=1,...,J^*(\epsilon)$, are orthonormal vectors, and $\sigma_j$ are the singular values of $M^\epsilon$.  Let $U$ and $V$ denote the linear span of $\{u_1,...,u_{J^*(\epsilon)}\}$ and $\{v_1,...,v_{J^*(\epsilon)}\}$ respectively. Then, the subdifferential of $\|M^\epsilon\|_*$ can be represented by the following set of matrices \citep{watson1992characterization}: 
	\[
	\partial\|M^\epsilon\|_* = \left\{\sum_{j=1}^{J^*(\epsilon)} u_jv_j^T + P_{U^\bot}WP_{V^\bot}: \|W\|_{op} \le 1 \right\},
	\]
	where $U^\bot$ denotes the orthogonal complements of $U$ and $P_{U^\bot}$ denotes  the projection on the linear vector subspace $U^\bot$. The same argument applies to $V$ and $P_{V^\bot}$. Thus, $B^\epsilon \in \partial \|M^\epsilon\|_*$ can be represented as 
	\begin{equation}\label{eq::proof_lem_S1_eq4}
		B^\epsilon = \sum_{j=1}^{J^*(\epsilon)} u_jv_j^T + P_{U^\bot}WP_{V^\bot}
	\end{equation}
	for arbitrary matrix $W$ having $\|W\|_{op} \le 1$. Due to the trace duality,
	there exists $W$ with $\|W\|_{op}\le 1$ such that 
	\begin{equation}\label{eq::proof_lem_S1_eq5}
		\langle  P_{U^\bot}WP_{V^\bot}, \widehat M - M^\epsilon\rangle = \langle  P_{U^\bot}WP_{V^\bot}, \widehat M \rangle = \langle  W, P_{U^\bot}\widehat{M}P_{V^\bot} \rangle = \|P_{U^\bot}\widehat M P_{V^\bot}\|_*
	\end{equation}

	So, it follows from  \eqref{eq::proof_lem_S1_eq3} that
	\begin{equation}\label{eq::proof_lem_S1_eq6}
		\begin{aligned}
			& \frac{2}{np}\langle \widehat M-M, \widehat M - M^\epsilon \rangle + \frac{2}{np}\langle M, \widehat M - M^\epsilon \rangle + \lambda \langle \widehat B- B^\epsilon, \widehat M - M^\epsilon\rangle  \\
			&  = \frac{2}{N}\sum_{t=1}^{N} \langle \operatorname{E}( y_tX_t), \widehat M- M^\epsilon\rangle - \lambda \langle B^\epsilon, \widehat M - M^\epsilon\rangle + \frac{2}{N}\sum_{t=1}^{N} \langle y_t X_t -\operatorname{E}(y_t X_t), \widehat M- M^\epsilon\rangle
		\end{aligned}
	\end{equation}

	Due to the monotonicity of subdifferentials of convex functions $M \mapsto \|M\|_*$,  $\langle \widehat B- B^\epsilon, \widehat M - M^\epsilon\rangle  \ge 0$. So, \eqref{eq::proof_lem_S1_eq6} can be further simplified:
	\begin{equation}\label{eq::proof_lem_S1_eq7}
		\begin{aligned}
			\frac{2}{np}\langle \widehat M-M, \widehat M - M^\epsilon \rangle 
			&  \le  - \lambda \langle B^\epsilon, \widehat M - M^\epsilon\rangle +  2\langle \Delta, \widehat M- M^\epsilon\rangle \\
			\underrightarrow{\eqref{eq::proof_lem_S1_eq4}}
			& = - \lambda \langle  \sum_{j=1}^{J^*(\epsilon)} u_jv_j^T + P_{U^\bot}WP_{V^\bot}, \widehat M - M^\epsilon\rangle +  2\langle \Delta, \widehat M- M^\epsilon\rangle\\
			\underrightarrow{\eqref{eq::proof_lem_S1_eq5}}
			& = - \lambda \langle  \sum_{j=1}^{J^*(\epsilon)} u_jv_j^T, \widehat M - M^\epsilon\rangle +  2\langle \Delta, \widehat M- M^\epsilon\rangle -\lambda \|P_{U^\bot}\widehat M P_{V^\bot}\|_*\\
		\end{aligned}
	\end{equation}
	where $\Delta = N^{-1}\sum_{t=1}^{N}[y_tX_t - \operatorname{E}(y_tX_t)]$.

	By arithmetic, we see that the left-hand side of \eqref{eq::proof_lem_S1_eq7} is equal to:
	\begin{equation}\label{eq::proof_lem_S1_eq8}
		\begin{aligned}
			2\langle \widehat M-M, \widehat M - M^\epsilon \rangle  
			& = \langle \widehat{M}-M, \widehat{M}-M + M - M^\epsilon  \rangle   + \langle\widehat{M}-M^\epsilon + M^\epsilon -M, \widehat{M}-M^\epsilon \rangle\\
			& = \|\widehat M - M\|_F^2 - \|M^\epsilon - M\|_F^2 + \|\widehat M - M^\epsilon\|_F^2.
		\end{aligned}
	\end{equation}

	As for the right side of \eqref{eq::proof_lem_S1_eq7}, we use the following facts:
	\begin{equation}\label{eq::proof_lem_S1_eq9}
		\|\sum_{j=1}^{J^*(\epsilon)} u_jv_j^T\|_{op} = 1 \quad \text{and}\quad \langle \sum_{j=1}^{J^*(\epsilon)} u_jv_j^T ,  \widehat M - M^\epsilon\rangle = \langle \sum_{j=1}^{J^*(\epsilon)} u_jv_j^T,  P_U(\widehat M - M^\epsilon)P_V\rangle.
	\end{equation}

	Given \eqref{eq::proof_lem_S1_eq8}-\eqref{eq::proof_lem_S1_eq9}, \eqref{eq::proof_lem_S1_eq7} becomes
	\begin{equation}\label{eq::proof_lem_S1_eq10}
		\begin{aligned}
			&\frac{1}{np}\|\widehat M - M\|_F^2  + \frac{1}{np}\|\widehat M - M^\epsilon\|_F^2  + \lambda \|P_{U^\bot}\widehat M P_{V^\bot}\|_*\\
			&\le - \lambda \langle  \sum_{j=1}^{J^*(\epsilon)} u_jv_j^T, \widehat M - M^\epsilon\rangle + \frac{1}{np}\|M^\epsilon - M\|_F^2  +  2\langle \Delta, \widehat M- M^\epsilon\rangle \\
			& \le \lambda \|P_U(M^\epsilon-\widehat M)P_V\|_*+ \frac{1}{np}\|M^\epsilon - M\|_F^2  +  2\langle \Delta, \widehat M- M^\epsilon\rangle , \\
		\end{aligned}
	\end{equation}
	where the last inequality is due to $|\langle M_1, M_2 \rangle| \le \|M_1\|_{op} \times \|M_2\|_*$.
	
	In \eqref{eq::proof_lem_S1_eq10}, the stochastic error term $\langle \Delta, \widehat M - M^\epsilon \rangle$ can be decomposed:
	\begin{equation}\label{eq::proof_lem_S1_eq11}
		\begin{aligned}
			\langle \Delta, \widehat M - M^\epsilon\rangle & = \langle \mathcal P_{M^\epsilon}(\Delta), \widehat M-M^\epsilon\rangle + \langle P_{U^\bot}\Delta P_{V^\bot}, \widehat M-M^\epsilon\rangle \\
			& = \langle \mathcal P_{M^\epsilon}(\Delta), \mathcal P_{M^\epsilon}(\widehat M - M^\epsilon)\rangle  + \langle \mathcal P_{M^\epsilon}(\Delta), \mathcal P_{U^\bot}(\widehat M - M^\epsilon)P_{V^\bot}\rangle \\
			& ~~~ + \langle P_{U^\bot}\Delta P_{V^\bot}, \mathcal P_{M^\epsilon}(\widehat M)\rangle + \langle P_{U^\bot}\Delta P_{V^\bot}, P_{U^\bot}\widehat M P_{V^\bot}\rangle -  \langle P_{U^\bot}\Delta P_{V^\bot}, M^\epsilon\rangle\\
			& = \langle \mathcal P_{M^\epsilon}(\Delta), \mathcal P_{M^\epsilon}(\widehat M - M^\epsilon)\rangle  + \langle P_{U^\bot}\Delta P_{V^\bot}, P_{U^\bot}\widehat M P_{V^\bot}\rangle 
		\end{aligned}
	\end{equation}
	where $\mathcal P_{M^\epsilon}(\Delta) = \Delta - P_{U^\bot}\Delta P_{V^\bot}$. So it can be upper bounded by:
	\begin{equation}\label{eq::proof_lem_S1_eq12}
		\begin{aligned}
			|\langle \Delta, \widehat M - M^\epsilon\rangle |
			& \le \|\mathcal P_{M^\epsilon}(\Delta)\|_F \|\mathcal P_{M^\epsilon}(\widehat M - M^\epsilon)\|_F +  \|P_{U^\bot}\Delta P_{V^\bot}\|_{op}\|P_{U^\bot}\widehat M P_{V^\bot}\|_*\\
			& \le \|\mathcal P_{M^\epsilon}(\Delta)\|_F \|\widehat M - M^\epsilon\|_F +  \|P_{U^\bot}\Delta P_{V^\bot}\|_{op}\|P_{U^\bot}\widehat M P_{V^\bot}\|_*\\
			& \le \sqrt{2J^*(\epsilon)} \|\Delta\|_{op} \|\widehat M - M^\epsilon\|_F +  \|\Delta\|_{op}\|P_{U^\bot}\widehat M P_{V^\bot}\|_* .
		\end{aligned}
	\end{equation}

	The last inequality is due to the facts that
	\[
	\begin{aligned}
		\|\mathcal P_{M^\epsilon}(\Delta)\|_F
		&\le \sqrt{\operatorname{rank}(\mathcal P_{M^\epsilon}(\Delta))} \|\Delta\|_{op}  = \sqrt{\operatorname{rank}(P_{U^\bot}\Delta P_{V} + P_U\Delta)} \|\Delta\|_{op} \\
		& \le \sqrt{2\operatorname{rank}(M^\epsilon)} \|\Delta\|_{op} = \sqrt{2J^*(\epsilon)} \|\Delta\|_{op}
	\end{aligned}
	\]
	and $\|P_{U^\bot}\Delta P_{V^\bot}\|_{op} \le \|\Delta\|_{op}$.
	
	Meanwhile, the first term in the right-hand side of \eqref{eq::proof_lem_S1_eq10} can also be bounded:
	\begin{equation} \label{eq::proof_lem_S1_eq13}
		\|P_U(M^\epsilon-\widehat M)P_V\|_*  \le \sqrt{\operatorname{rank}(M^\epsilon)}\|P_U(M^\epsilon-\widehat M)P_V\|_F \le \sqrt{J^*(\epsilon)}\|M^\epsilon-\widehat M\|_F.
	\end{equation}

	Combining \eqref{eq::proof_lem_S1_eq12} - \eqref{eq::proof_lem_S1_eq13},  \eqref{eq::proof_lem_S1_eq10} becomes
	\begin{equation}
		\begin{aligned}
			\frac{1}{np}\|\widehat M - M\|_F^2  + &\frac{1}{np}\|\widehat M - M^\epsilon\|_F^2  + (\lambda -2\|\Delta\|_{op})  \|P_{U^\bot}\widehat M P_{V^\bot}\|_*\\
			& \le \lambda \sqrt{J^*(\epsilon)}\|M^\epsilon-\widehat M\|_F + \epsilon^2 + 2\sqrt{2J^*(\epsilon)} \|\Delta\|_{op}\|\widehat M - M^\epsilon\|_F.
		\end{aligned}
	\end{equation}

	If $\lambda \ge 2\|\Delta\|_{op}$, then 
	\begin{equation}
		\frac{1}{np}\|\widehat M - M\|_F^2  + \frac{1}{np}\|\widehat M - M^\epsilon\|_F^2  \le \epsilon^2 + (1+\sqrt{2}) \lambda \sqrt{J^*(\epsilon)}\|\widehat M - M^\epsilon\|_F
	\end{equation}
	which implies
	\begin{equation}
		\begin{aligned}
			\frac{1}{np}\|\widehat M - M\|_F^2 
			& \le \epsilon^2 + (1+\sqrt{2}) \lambda \sqrt{J^*(\epsilon)}\|\widehat M - M^\epsilon\|_F - \frac{1}{np}\|\widehat M - M^\epsilon\|_F^2\\
			& \le \epsilon^2 + \left(\frac{1+\sqrt{2}}{2}\right)^2 J^*(\epsilon) \lambda^2 np
		\end{aligned}
	\end{equation}
	as claimed.
\end{proof}


The result in Lemma~\ref{lem::upper_bound0} still contains regularization parameter $\lambda$. When $\lambda$ is selected too large, then entries of $\widehat{M}$ will be overly shrunk toward zero and give poor reconstruction error. If $\lambda$ is too small, then our constraint, $\lambda \ge 2\|\Delta\|_{op}$, will not be satisfied. Thus, it is important to identify a minimal value for $\lambda$ such that $\lambda \ge 2\|\Delta\|_{op}$ with high probability. Here, we introduce the second lemma, which gives an upper bound for $\|\Delta\|_{op}$.

\begin{lemma}\label{lem::op_bound}
	Consider the same data generating mechanism as in Lemma~\ref{lem::upper_bound0}, with $X_t$ are i.i.d. uniformly sampled from $\mathcal{X}$. Then, there exists constant $c_1$ (dependent on $\sigma$ and $\|M\|_\infty$) such that 
	\begin{equation}\label{eq::bound_op}
		\|\Delta\|_{op}
		\le c_1 \left[\sqrt{\frac{\log(n+p)}{N(n\wedge p)}}+ \sqrt{\log\left(\frac{8 (n\wedge p)}{3\sigma^2}\right)}\frac{\log(n+p)}{N}\right]
	\end{equation}
	with probability at least $1-2(n+p)^{-1}$. 
	
	Furthermore, when $N \ge (n\wedge p)\log^2(n+p)$, we have $\|\Delta\|_{op} \le 2c_1\sqrt{\frac{\log(n+p)}{N(n\wedge p)}}$ with probability at least $1-2(n+p)^{-1}$. 
\end{lemma}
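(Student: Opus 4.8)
The plan is to recognize $\Delta$ as an average of i.i.d.\ mean‑zero random matrices and apply a non‑commutative (matrix) Bernstein inequality, following \citet{koltchinskii2011}, after first splitting off the Gaussian noise. Write $y_t X_t = \langle X_t, M\rangle X_t + \xi_t X_t$; since $\xi_t$ is mean zero and independent of $X_t$, the term $\xi_t X_t$ is already centered, so $\Delta = \Delta_1 + \Delta_2$ with $\Delta_1 = N^{-1}\sum_{t}\bigl(\langle X_t, M\rangle X_t - \operatorname{E}[\langle X_t, M\rangle X_t]\bigr)$ and $\Delta_2 = N^{-1}\sum_t \xi_t X_t$. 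I will bound $\|\Delta_1\|_{op}$ and $\|\Delta_2\|_{op}$ separately, each on an event of probability at least $1-(n+p)^{-1}$, and combine by a union bound.

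For $\Delta_1$, each summand $W_t := \langle X_t, M\rangle X_t - \operatorname{E}[\langle X_t, M\rangle X_t]$ is bounded: $X_t = e_n(i)e_p(j)^T$ has operator norm $1$ and $|\langle X_t, M\rangle| = |m_{ij}| \le \|M\|_\infty$, so $\|W_t\|_{op} \le 2\|M\|_\infty$. For the variance parameter, $X_t X_t^T = e_n(i)e_n(i)^T$ and $\pi_{ij} = 1/(np)$ give $\bigl\|\operatorname{E}[W_tW_t^T]\bigr\|_{op} \le \bigl\|\operatorname{E}[\langle X_t,M\rangle^2 X_tX_t^T]\bigr\|_{op} = \max_i \frac{1}{np}\sum_j m_{ij}^2 \le \|M\|_\infty^2/n$, and symmetrically $\bigl\|\operatorname{E}[W_t^TW_t]\bigr\|_{op} \le \|M\|_\infty^2/p$, so the matrix variance statistic of $\sum_{t\le N}W_t$ is at most $N\|M\|_\infty^2/(n\wedge p)$. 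Matrix Bernstein (ambient dimension $n+p$) then yields $\|\Delta_1\|_{op} \le c\,\|M\|_\infty\bigl(\sqrt{\log(n+p)/(N(n\wedge p))} + \log(n+p)/N\bigr)$ with the stated probability.

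For $\Delta_2$, the summands $\xi_t X_t$ enjoy the same favorable variance structure — $\operatorname{E}[X_tX_t^T] = I_n/n$, $\operatorname{E}[X_t^TX_t] = I_p/p$, so the variance statistic is $\le N\sigma^2/(n\wedge p)$ — but they are unbounded because $\xi_t$ is Gaussian. I would resolve this by truncation at level $\tau = \sigma\sqrt{2\log\bigl(8(n\wedge p)/(3\sigma^2)\bigr)}$, chosen so that $\operatorname{P}(|\xi_t| > \tau)$ is of the order of the per‑coordinate variance scale $\sigma^2/(n\wedge p)$: split $\xi_t = \xi_t\mathbf{1}\{|\xi_t|\le\tau\} + \xi_t\mathbf{1}\{|\xi_t|>\tau\}$, which by symmetry keeps both pieces mean zero. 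Matrix Bernstein applied to the now $\tau$‑bounded truncated piece contributes $\lesssim \sigma\sqrt{\log(n+p)/(N(n\wedge p))} + \tau\log(n+p)/N$, which is exactly the two‑term shape of \eqref{eq::bound_op} with the $\sqrt{\log(8(n\wedge p)/(3\sigma^2))}$ factor on the second term; the tail piece has negligible variance and is controlled by a further union bound / second‑moment estimate over the $N$ Gaussians. Collecting the bounds on $\Delta_1$ and $\Delta_2$, absorbing $\|M\|_\infty$, $\sigma$ and numerical constants into a single $c_1$, and adding failure probabilities gives \eqref{eq::bound_op} with probability at least $1-2(n+p)^{-1}$. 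The ``furthermore'' statement is then immediate: when $N \ge (n\wedge p)\log^2(n+p)$ one has $\log\bigl(8(n\wedge p)/(3\sigma^2)\bigr)\cdot\log(n+p)/N \le 1/(n\wedge p)$ for $n+p$ large (since $\log\bigl(8(n\wedge p)/(3\sigma^2)\bigr) = O(\log(n+p))$), so the second term of \eqref{eq::bound_op} is dominated by the first and $\|\Delta\|_{op} \le 2c_1\sqrt{\log(n+p)/(N(n\wedge p))}$.

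I expect the main obstacle to be the treatment of the Gaussian noise term $\Delta_2$: since $\xi_t X_t$ is not bounded, matrix Bernstein cannot be applied directly, and the truncation level must be calibrated so that simultaneously (i) the boundedness term of Bernstein for the truncated part reproduces precisely the $\sqrt{\log(8(n\wedge p)/(3\sigma^2))}\,\log(n+p)/N$ factor appearing in \eqref{eq::bound_op}, and (ii) the discarded tail — which may hit many coordinates, since $\operatorname{P}(|\xi_t|>\tau)$ is only polynomially small in $n\wedge p$ rather than in $n+p$ — still has operator norm controlled by the main term. The remaining work (the Bernstein bookkeeping for $\Delta_1$, the variance computations that exploit the single‑entry structure of $X_t$, and the closing arithmetic for the simplified bound) is routine.
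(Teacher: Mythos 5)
Your overall architecture matches the paper's: the same split of $\Delta$ into a bounded ``signal'' part and a Gaussian ``noise'' part, the same variance computations exploiting the single-entry structure of $X_t$ (giving $\|M\|_\infty^2/(n\wedge p)$ and $\sigma^2/(n\wedge p)$), a bounded matrix Bernstein bound for the signal part, a union bound with deviation level $h=\log(n+p)$, and the closing arithmetic for the ``furthermore'' claim. The divergence --- and the gap --- is in the noise term $\Delta_2$. The paper does not truncate: it invokes an Orlicz-norm ($\psi_2$) version of matrix Bernstein (Proposition~\ref{prop::sara_thm_9.7}), which applies directly to the unbounded summands $\xi_t X_t$ with $K=\|\xi_t\|_{\Psi(2)}$ and $R^2=\sigma^2/(n\wedge p)$; the factor $\sqrt{\log\left(8(n\wedge p)/(3\sigma^2)\right)}$ in \eqref{eq::bound_op} is precisely the $\log^{1/\alpha}(K/R)$ term of that inequality. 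Your substitute --- truncation at $\tau=\sigma\sqrt{2\log\left(8(n\wedge p)/(3\sigma^2)\right)}$ --- leaves a tail piece that you correctly flag as the main obstacle but never actually control, and the tools you name would not control it at that truncation level: the per-draw exceedance probability is only of order $\sigma^2/(n\wedge p)$, so with $N$ as large as $np$ one expects on the order of $N/(n\wedge p)\ge n\vee p$ exceedances (a union bound cannot make the tail event rare), while a Markov/second-moment estimate on the tail matrix $N^{-1}\sum_t \xi_t\mathbf{1}\{|\xi_t|>\tau\}X_t$ gives, at confidence $1-(n+p)^{-1}$, only a bound of order $\sqrt{(n+p)\,\sigma^3\tau/\left(N(n\wedge p)\right)}$, which exceeds the target main term $\sigma\sqrt{\log(n+p)/\left(N(n\wedge p)\right)}$ by roughly a factor $\sqrt{(n+p)/\log(n+p)}$. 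So, as written, the bound on $\Delta_2$ --- the substantive content of this lemma --- is not established.

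To repair it within your truncation framework you would need either (i) to truncate at the higher level $\sigma\sqrt{2\log\left(2N(n+p)\right)}$ (equivalently, condition on $\max_t|\xi_t|$ being below that level, an event of probability close to $1-(n+p)^{-1}$) and apply bounded Bernstein once, which does yield the ``furthermore'' conclusion under $N\ge(n\wedge p)\log^2(n+p)$ but replaces the paper's $\sqrt{\log\left(8(n\wedge p)/(3\sigma^2)\right)}$ factor by one of order $\sqrt{\log\left(N(n+p)\right)}$, or (ii) to use the sub-Gaussian/Orlicz matrix Bernstein inequality as the paper does, which avoids truncation entirely and gives \eqref{eq::bound_op} verbatim. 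The remaining steps of your plan (the $\Delta_1$ bound, the variance bookkeeping, and the final simplification) are correct and essentially identical to the paper's.
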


To derive the bound of the stochastic error $\Delta$, we shall use the matrix version of Bernstein's inequality. We now use 2 propositions from \citet{van2016estimation}. For completeness, we include statements of the propositions here below. 

\begin{proposition}\label{prop::sara_thm_9.7}
	Let $\{Z_t\}_{t=1}^N$ be i.i.d. $n \times p$ matrices that satisfy for some $\alpha\ge 1$ and all $t$
	\[
	\operatorname{E}Z_t = \mathbf{0},  \quad  K: = \| \|Z_t\|_{op} \|_{\Psi(\alpha)} < \infty, 
	\]
	where $\|\cdot\|_{\Psi(\alpha)}$ is the $\Psi(\alpha)$-Orlicz norm defined as $\|z\|_{\Psi(\alpha)}:=\inf\left\{c>0: \operatorname{E}\exp\left(\frac{|z^\alpha|}{c^{\alpha}} \right) \le 2 \right\}$ for a random variable $z \in \mathbb{R}$. Define
	\[
	R^2 := \max \left\{ \left\|  \frac{1}{N}\sum_{t=1}^{N} \operatorname{E}Z_tZ_t^T \right\|_{op},  \left\|  \frac{1}{N}\sum_{t=1}^{N} \operatorname{E}Z_t^TZ_t \right\|_{op} \right\}.
	\]
	Then for a constant $\tilde{c}$ and for all $h > 0$,
	\[
	\mathbb P \left(\left\|\frac{1}{N} \sum_{t=1}^{N} Z_t \right\|_{op}  \ge \tilde{c} R \sqrt{\frac{h+\log(n+p)}{N}} +  \tilde{c}\log^{1/\alpha}\left(\frac{K }{R}\right)\left(\frac{h+\log(n+p)}{N}\right)\right) \le \exp(-h).
	\]
\end{proposition}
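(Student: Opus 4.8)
The statement is a matrix Bernstein inequality for i.i.d.\ summands whose operator norms are controlled only through an Orlicz norm rather than almost surely, so I would prove it by the standard combination of Hermitian dilation, truncation with recentering, and the classical matrix Bernstein inequality for boundedly-supported summands. The first step is to reduce to symmetric matrices: for each $Z_t$ set
\[
\tilde Z_t = \begin{pmatrix} 0 & Z_t \\ Z_t^T & 0 \end{pmatrix} \in \mathbb{R}^{(n+p)\times(n+p)},
\]
whose nonzero eigenvalues are $\pm$ the singular values of $Z_t$. Then $\|\tilde Z_t\|_{op}=\|Z_t\|_{op}$, $\|\tfrac1N\sum_t\tilde Z_t\|_{op}=\|\tfrac1N\sum_t Z_t\|_{op}$, and $\tilde Z_t^2=\operatorname{diag}(Z_tZ_t^T,\,Z_t^TZ_t)$, so by the i.i.d.\ assumption $\|\tfrac1N\sum_t\operatorname{E}\tilde Z_t^2\|_{op}=\max\{\|\operatorname{E}Z_1Z_1^T\|_{op},\|\operatorname{E}Z_1^TZ_1\|_{op}\}=R^2$. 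This turns the claim into a concentration statement for a sum of i.i.d.\ symmetric $(n+p)\times(n+p)$ matrices with variance proxy $R^2$.

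Next I would truncate. Fix a threshold $\tau$ (to be chosen) and split $Z_t = Z_t^{(1)}+Z_t^{(2)}$ with $Z_t^{(1)}=Z_t\,\mathbf 1\{\|Z_t\|_{op}\le\tau\}$. The Orlicz bound gives $\mathbb P(\|Z_t\|_{op}>\tau)\le 2\exp(-(\tau/K)^\alpha)$, so on the event $\mathcal A=\{\|Z_t\|_{op}\le\tau\ \forall t\}$, which has probability at least $1-2N\exp(-(\tau/K)^\alpha)$, we have $\tfrac1N\sum_tZ_t=\tfrac1N\sum_tZ_t^{(1)}$; off $\mathcal A$ I bound the probability by $2N\exp(-(\tau/K)^\alpha)$. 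Since $\operatorname{E}Z_t^{(1)}=-\operatorname{E}Z_t^{(2)}$ is generally nonzero, I recenter $\bar Z_t:=Z_t^{(1)}-\operatorname{E}Z_t^{(1)}$, noting $\|\operatorname{E}Z_t^{(1)}\|_{op}\le\operatorname{E}[\|Z_t\|_{op}\mathbf 1\{\|Z_t\|_{op}>\tau\}]$, which by integrating the Orlicz tail is of order $\tau\exp(-(\tau/K)^\alpha)$ up to constants and polynomial factors, hence negligible once $\tau$ exceeds $K\log^{1/\alpha}(\cdot)$ by a constant factor. After dilation, $\|\widetilde{\bar Z_t}\|_{op}\le 2\tau$ almost surely, and $\operatorname{E}\widetilde{\bar Z_t}^2\preceq\operatorname{E}\widetilde{Z_t^{(1)}}{}^2\preceq\operatorname{E}\widetilde Z_t^2$, so the variance proxy stays at most $R^2$.

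Then I would apply the matrix Bernstein inequality to $S=\tfrac1N\sum_t\widetilde{\bar Z_t}$, obtaining, for all $x>0$,
\[
\mathbb P\big(\|S\|_{op}\ge x\big)\le 2(n+p)\exp\!\Big(-\frac{Nx^2/2}{R^2+2\tau x/3}\Big).
\]
Setting the right-hand side equal to $\exp(-(h+\log(n+p)))$ — i.e.\ absorbing the $2(n+p)$ dimension factor into the exponent — and solving the resulting quadratic in $x$ gives $x\lesssim R\sqrt{(h+\log(n+p))/N}+\tau(h+\log(n+p))/N$. Choosing $\tau$ of order $K\log^{1/\alpha}(K/R)$ — more precisely $\tau\asymp K\,[\,h+\log(n+p)+\log(K/R)\,]^{1/\alpha}$, large enough that both $2N\exp(-(\tau/K)^\alpha)\le\exp(-h)$ and the recentering bias $N\|\operatorname{E}Z_t^{(1)}\|_{op}$ is dominated by the two displayed terms — and adding back the probability of $\mathcal A^c$, collapses everything into the stated bound after enlarging the absolute constant $\tilde c$.

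The main obstacle is the bookkeeping in the choice of $\tau$: it must be calibrated as a function of $h$, $n+p$, $K$, $R$, and $N$ so that simultaneously the discarded-mass term is at most $e^{-h}$, the recentering bias is swamped by the Bernstein terms, and the multiplicative $\log^{1/\alpha}$ factor in the sub-exponential term comes out with exactly the argument $K/R$ rather than a messier expression; achieving this last point cleanly may require a short bootstrap in which a crude first bound on $x$ is fed back into the optimal $\tau$. The dilation step, the bounded-case matrix Bernstein inequality, and solving the quadratic are all routine.
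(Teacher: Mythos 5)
The paper itself gives no proof of this proposition: it is imported verbatim from \citet{van2016estimation} (``For completeness, we include statements of the propositions here below''), and it is essentially the Orlicz-norm matrix Bernstein inequality of Koltchinskii--Lounici--Tsybakov. So your sketch is being compared to the standard literature argument, and your skeleton --- Hermitian dilation, truncation with recentering, bounded-case matrix Bernstein --- is indeed the right family of ideas; the dilation step, the variance computation $\|\frac{1}{N}\sum_t\operatorname{E}\tilde Z_t^2\|_{op}=R^2$, and the recentering bias estimate are all fine.

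The genuine gap is in the truncation step, and you have half-noticed it yourself. To make the discarded-mass probability $2N\exp(-(\tau/K)^\alpha)$ at most $e^{-h}$ you are forced to take $\tau\gtrsim K\,(h+\log(2N))^{1/\alpha}$, and then the sub-exponential Bernstein term is of order $K\,(h+\log N)^{1/\alpha}\,\frac{h+\log(n+p)}{N}$. This is strictly weaker than the claimed $\log^{1/\alpha}(K/R)\,\frac{h+\log(n+p)}{N}$ whenever $h+\log N\gg\log(K/R)$, and it also drags in a $\log N$ that does not appear in the target bound. Your proposed choice $\tau\asymp K[h+\log(n+p)+\log(K/R)]^{1/\alpha}$ has exactly this defect, and the ``short bootstrap'' you invoke cannot remove the $h$- and $N$-dependence from inside the $1/\alpha$ power: the obstruction is not the calibration of a single threshold but the union bound itself. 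The complete proofs handle the unbounded parts differently --- e.g.\ an Adamczak/Orlicz-type control of $\max_t\|Z_t\|_{op}$ combined with concentration for the sum of the truncated tails, or a peeling over dyadic truncation levels --- and that is what yields a second term linear in $h+\log(n+p)$ with the fixed factor $\log^{1/\alpha}(K/R)$; you would need to add such a device to close the argument. One further remark: as printed the second term is missing a factor of $K$ (the statement is not scale-invariant, since rescaling $Z_t\mapsto cZ_t$ scales the left-hand side and $R$ but leaves $\log^{1/\alpha}(K/R)$ unchanged), so the literal display cannot be proved as written; your truncation analysis correctly produces the $K\log^{1/\alpha}(K/R)$ form, which is also the form the paper actually uses later, where $K=\sqrt{8/3}$ is an absolute constant.
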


\begin{proposition}\label{prop::sara_thm_9.5}
	Let $\{Z_t\}_{t=1}^N$ be $n \times p$ matrices that satisfy for a constant $K_1$ 
	\[
	\operatorname{E}Z_t = \mathbf{0},  \quad   \max_{1\le t \le N}\|Z_t\|_{op} \le  K_1.
	\]
	With the same definition for $R$ as in Proposition~\ref{prop::sara_thm_9.7}
	Then for all $h > 0$,
	\[
	\mathbb P \left(\left\|\frac{1}{N} \sum_{t=1}^{N} Z_t \right\|_{op}  \ge \sqrt{2}R \sqrt{\frac{h+\log(n+p)}{N}} + \frac{K_1[h+\log(n+p)]}{3N}\right) \le \exp(-h).
	\]
\end{proposition}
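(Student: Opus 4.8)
The plan is to recognize this as the (rectangular) matrix Bernstein inequality and prove it by the matrix Laplace-transform method, then convert the resulting Chernoff estimate into the stated deviation form. First I would remove the rectangular shape via the self-adjoint dilation
\[
\mathcal{D}(Z) = \begin{pmatrix} \mathbf 0 & Z \\ Z^{\top} & \mathbf 0 \end{pmatrix} \in \mathbb{R}^{(n+p)\times(n+p)}.
\]
This map is linear, satisfies $\lambda_{\max}(\mathcal{D}(B)) = \|B\|_{op}$ for every $B$ (the eigenvalues of $\mathcal{D}(B)$ are $\pm$ the singular values of $B$), and obeys $\mathcal{D}(B)^2 = \operatorname{diag}(BB^{\top}, B^{\top}B)$. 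Setting $S_t := \mathcal{D}(Z_t)$ and $W := \sum_{t=1}^N S_t = \mathcal{D}(\sum_t Z_t)$, the $S_t$ become independent, mean-zero, Hermitian matrices with $\|S_t\|_{op}\le K_1$, and $\big\|\sum_t\operatorname{E}S_t^2\big\|_{op} = \max\{\|\sum_t\operatorname{E}Z_tZ_t^{\top}\|_{op},\ \|\sum_t\operatorname{E}Z_t^{\top}Z_t\|_{op}\} = N R^2$. Hence it suffices to control the \emph{one}-sided deviation $\mathbb{P}(\lambda_{\max}(W)\ge a)$, which is exactly why only a single $e^{-h}$ (not $2e^{-h}$) and a dimension factor $n+p$ will surface.

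Next I would run the matrix Chernoff argument. For $0<\theta<3/K_1$, Markov's inequality applied to $e^{\theta\lambda_{\max}(W)}\le \operatorname{tr}e^{\theta W}$ gives $\mathbb{P}(\lambda_{\max}(W)\ge a)\le e^{-\theta a}\operatorname{E}\operatorname{tr}e^{\theta W}$. Invoking Lieb's concavity theorem (equivalently, subadditivity of matrix cumulant generating functions) peels off the sum: $\operatorname{E}\operatorname{tr}e^{\theta W}\le \operatorname{tr}\exp\!\big(\sum_t\log\operatorname{E}e^{\theta S_t}\big)$. The scalar bound $\tfrac{e^{y}-1-y}{y^2}\le\tfrac{1/2}{1-y/3}$ on $[0,3)$, transferred to Hermitian matrices, yields $\log\operatorname{E}e^{\theta S_t}\preceq g(\theta)\operatorname{E}S_t^2$ with $g(\theta)=\tfrac{\theta^2/2}{1-K_1\theta/3}$. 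Using monotonicity of $\operatorname{tr}\exp(\cdot)$ and of $\lambda_{\max}$ under the Loewner order together with the variance identity above collapses everything to
\[
\mathbb{P}(\lambda_{\max}(W)\ge a)\ \le\ (n+p)\exp\!\Big(\tfrac{\theta^2/2}{1-K_1\theta/3}\,N R^2\ -\ \theta a\Big),\qquad 0<\theta<3/K_1 .
\]

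Finally I would invert this. Rather than taking the $\theta$ that minimizes the right side (which produces the tail $\exp(-\tfrac{a^2/2}{NR^2+K_1a/3})$ and loses a factor of $2$ on the linear term after inversion), I would plug in the deviation target $a=\sqrt{2NR^2u}+\tfrac{K_1u}{3}$ together with the slightly sub-optimal choice $\theta=\big(\tfrac{K_1}{3}+\sqrt{\tfrac{NR^2}{2u}}\big)^{-1}<3/K_1$; a one-line computation shows the exponent collapses exactly to $-u$, so $\mathbb{P}\big(\lambda_{\max}(W)\ge \sqrt{2NR^2u}+\tfrac{K_1u}{3}\big)\le (n+p)e^{-u}$. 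Choosing $u=h+\log(n+p)$ turns the right side into $e^{-h}$, and dividing the event by $N$ (using $\|\tfrac1N\sum_tZ_t\|_{op}=\tfrac1N\lambda_{\max}(W)$) gives precisely $\sqrt2\,R\sqrt{(h+\log(n+p))/N}+K_1(h+\log(n+p))/(3N)$, as claimed.

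The only genuinely delicate step is the cumulant subadditivity $\operatorname{E}\operatorname{tr}e^{\theta W}\le\operatorname{tr}\exp(\sum_t\log\operatorname{E}e^{\theta S_t})$, which rests on Lieb's concavity theorem; it is precisely this step that lets the variance proxy enter as $\|\sum_t\operatorname{E}S_t^2\|_{op}$ rather than the cruder $\sum_t\|\operatorname{E}S_t^2\|_{op}$ that a Golden--Thompson argument would give. Everything else — the dilation identities, the scalar exponential lemma, and the Chernoff optimization — is routine bookkeeping. Alternatively, one may simply cite the rectangular matrix Bernstein inequality as a black box, in which case only the last paragraph (with the conversion $u=h+\log(n+p)$) is needed.
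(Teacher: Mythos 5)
Your proof is correct, but it is worth noting that the paper does not prove this proposition at all: it is quoted verbatim from \citet{van2016estimation} (Theorem 9.5 there) as a known form of the matrix Bernstein inequality, and the paper only uses it as a black box inside the proof of Lemma~\ref{lem::op_bound}. What you have supplied is essentially the standard Tropp-style derivation that underlies that cited result: the self-adjoint dilation to reduce the rectangular operator norm to $\lambda_{\max}$ of an $(n+p)\times(n+p)$ Hermitian sum (which is exactly where the single factor $e^{-h}$ and the $\log(n+p)$ come from), the Laplace-transform/Lieb subadditivity step, the Bernstein-type bound $\log\operatorname{E}e^{\theta S_t}\preceq \frac{\theta^2/2}{1-K_1\theta/3}\operatorname{E}S_t^2$, and then the inversion. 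I checked your inversion step: with $a=\sqrt{2NR^2u}+K_1u/3$ and $\theta=\bigl(K_1/3+\sqrt{NR^2/(2u)}\bigr)^{-1}$ the exponent does collapse exactly to $-u$, and $u=h+\log(n+p)$ then cancels the dimensional prefactor, reproducing the stated constants $\sqrt2\,R$ and $K_1/3$ (the naive optimization-then-inversion would indeed inflate the linear term). Two small remarks: the proposition as stated in the paper omits the independence hypothesis on the $Z_t$, which your argument (and the cited source) requires and which you correctly assume; and, as you yourself observe, if one is content to cite the Chernoff-form matrix Bernstein bound, only your final inversion paragraph is genuinely needed --- that minimal route is effectively what the paper takes by citing the deviation-form statement directly.
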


Given the above results, we now prove Lemma~\ref{lem::op_bound}.

\begin{proof}[Proof of Lemma~\ref{lem::op_bound}]
	$\|\Delta\|_{op}$ can be decomposed into two parts as below and we shall bound each part respectively.
	\begin{equation}\label{eq::proof_lem_S2_eq1}
		\begin{aligned}
			\|\Delta\|_{op} 
			& = \left\|\frac{1}{N} \sum_{t=1}^{N} [y_tX_t - \operatorname{E}(y_tX_t)]\right\|_{op} \\
			& = \left\|\frac{1}{N} \sum_{t=1}^{N} [\xi_tX_t -\operatorname{E}(\xi_tX_t)+ \operatorname{tr}[M^TX_t]X_t - \operatorname{E}(\operatorname{tr}(M^TX_t)X_t ) \right\|_{op} \\
			& \le \left\|\frac{1}{N} \sum_{t=1}^{N} \xi_t X_t\right\|_{op} + \left\|\frac{1}{N} \sum_{t=1}^{N} \left(\operatorname{tr}(M^TX_t)X_t - \operatorname{E}(\operatorname{tr}(M^TX_t)X_t)\right)\right\|_{op}\\ 
			& = I_1 + I_2.
		\end{aligned}
	\end{equation}
	
	We use Proposition~\ref{prop::sara_thm_9.7} to bound $I_1$. Let $Z_{1,t} = \xi_tX_t$. Since $\xi_t \sim_{i.i.d.} N(0,\sigma^2)$ and $X_t$ are i.i.d. uniformly sampled from $\mathcal{X}$ with $\xi_t \indep X_t$, $\{Z_{1,t}\}_{t=1}^N$ are i.i.d. $n\times p$ matrices having
	\[
	\operatorname{E}Z_{1,t} =  \mathbf{0}, \quad  K:= \| \| Z_{1,t} \|_{op} \|_{\Psi(\alpha)}  =\| \xi_t \|_{\Psi(\alpha)}.
	\]
	
	For a normal variable $z \sim N(0,1)$, we have $\operatorname{E}\exp(z^2/c^2) = c/\sqrt{c^2-2}$ when $c>\sqrt{2}$. Thus, $\operatorname{E}\exp(z^2/c^2) \le 2 \Rightarrow c \ge \sqrt{8/3}$. So, $K= \| \xi_t \|_{\Psi(2)} = \sqrt{8/3}$. Let 
	\[
	\begin{aligned}
		R^2 & := \max\left\{ \left\| \frac{1}{N}\sum_{t=1}^{N}\operatorname{E}Z_{1,t}Z_{1,t}^T\right\|_{op},  \left\| \frac{1}{N}\sum_{t=1}^{N}\operatorname{E}Z_{1,t}^TZ_{1,t}\right\|_{op}  \right\} \\
		& = \sigma^2 \max\left\{ \left\| \frac{1}{N}\sum_{t=1}^{N}\operatorname{E}(X_tX_t^T)\right\|_{op},  \left\| \frac{1}{N}\sum_{t=1}^{N}\operatorname{E}(X_t^TX_t)\right\|_{op}  \right\} \\
		& = \frac{\sigma^2}{n \wedge p}.
	\end{aligned}
	\]
	
	Due to Proposition~\ref{prop::sara_thm_9.7}, for some $\tilde{c}$ and for all $h>0$, we have
	\begin{equation}\label{eq::bound_I1}
		\mathbb P \left(I_1\ge \tilde{c} \sigma \sqrt{\frac{h+\log(n+p)}{N(n\wedge p)}} +  \tilde{c}\sqrt{\frac{1}{2}\log\left(\frac{8 (n\wedge p) }{3\sigma^2}\right)}\left(\frac{h+\log(n+p)}{N}\right)\right) \le \exp(-h).
	\end{equation}

	Similarly, we use Proposition~\ref{prop::sara_thm_9.5} to bound $I_2$. Let $Z_{2,t} = \operatorname{tr}(M^TX_t)X_t - \operatorname{E}\left(\operatorname{tr}(M^TX_t)X_t\right)$, where $\operatorname{E}\left(\operatorname{tr}(M^TX_t)X_t\right) = \frac{1}{np}M$. So, $\operatorname{E}(Z_{2,t})=\mathbf{0}$, and
	\[
	\|Z_{2,t}\|_{op} \le \left\| \operatorname{tr}(M^TX_t)X_t \right\|_{op}  + \left\|\operatorname{E}(\operatorname{tr}(M^TX_t)X_t) \right\|_{op} \le 2\|M\|_\infty.
	\] 
	Let $K_1 = 2\|M\|_\infty$. Then $\max_{1 \le t\le N} \|Z_{2,t}\|_{op} \le K_1$. Consider,
	\[
	\operatorname{E}(Z_{2, t}Z_{2, t}^T) = \operatorname{E}\left[\operatorname{tr}(M^TX)^2XX^T\right] - \left(\frac{1}{np}\right)^2MM^T,
	\]
	\[
	\operatorname{E}(Z_{2, t}^TZ_{2, t}) = \operatorname{E}\left[\operatorname{tr}(M^TX)^2X^TX\right] - \left(\frac{1}{np}\right)^2M^TM.
	\]
	
	Then,
	\[
	\begin{aligned}
		\left\|  \operatorname{E}(Z_{2, t}Z_{2, t}^T) \right\|_{op} & \le \left\| \operatorname{E}\left[\operatorname{tr}(M^TX)^2XX^T\right] \right\|_{op}  + \left\| \left(\frac{1}{np}\right)^2MM^T \right\|_{op} \\
		& \le \|M\|_\infty^2/n + \frac{\|M\|_\infty^2}{np} \le 2 \|M\|_\infty^2/n,
	\end{aligned}
	\]
	and similarly $\left\|  \operatorname{E}(Z_{2, t}^TZ_{2, t}) \right\|_{op} \le 2 \|M\|_\infty^2/p$. Let
	\[
	\begin{aligned}
		R_1^2  := \max\left\{ \left\| \frac{1}{N}\sum_{t=1}^{N}\operatorname{E}(Z_{2, t}Z_{2, t}^T)\right\|_{op},  \left\| \frac{1}{N}\sum_{t=1}^{N}\operatorname{E}(Z_{2, t}^TZ_{2, t})\right\|_{op}  \right\}  \le \frac{2\|M\|_\infty^2}{n \wedge p}.
	\end{aligned}
	\]

	Then, applying Proposition~\ref{prop::sara_thm_9.5}, we have
	\begin{equation}\label{eq::bound_I2}
		\mathbb P \left(I_2  \ge 2\|M\|_\infty \sqrt{\frac{h+\log(n+p)}{N(n \wedge p)}} + \frac{2\|M\|_\infty[h+\log(n+p)]}{3N}\right) \le \exp(-h).
	\end{equation}

	Combining the results of \eqref{eq::bound_I1} and \eqref{eq::bound_I2}, for all $h>0$
	\begin{equation}\label{eq::proof_lem_S2_eq10}
		\begin{aligned}
			\mathbb P & \left( \|\Delta\|_{op}  \ge (\tilde{c}\sigma  + 2\|M\|_\infty) \left[ \sqrt{\frac{h+\log(n+p)}{N(n\wedge p)}}  + \sqrt{\frac{1}{2}\log\left(\frac{8 (n\wedge p)}{3\sigma^2}\right)} \left(\frac{h+\log(n+p)}{N}\right) \right] \right) \\
			& \le 2\exp(-h).
		\end{aligned}
	\end{equation}
	
	Select $h = \log (n+p)$ and let $c_1=\sqrt{2}(\tilde{c}\sigma  + 2\|M\|_\infty) $, then
	\begin{equation}\label{eq::proof_lem_S2_eq11}
		\mathbb P\left(\|\Delta\|_{op}   \ge c_1  \left[\sqrt{\frac{\log(n+p)}{N(n\wedge p)}}+ \sqrt{\log\left(\frac{8 (n\wedge p)}{3\sigma^2}\right)}\frac{\log(n+p)}{N}\right]\right)\le 2(n+p)^{-1}.
	\end{equation}

	In particular, if $N \ge (n\wedge p)\log^2(n+p)$, we have 
	\[
	\mathbb P\left(\|\Delta\|_{op}  \ge 2c_1\sqrt{\frac{\log(n+p)}{N(n\wedge p)}}\right) \le 2(n+p)^{-1},
	\]
	as desired.
	
\end{proof}


Based on Lemma~\ref{lem::upper_bound0} and \ref{lem::op_bound}, it is straightforward to prove Theorem~\ref{thm::upper_bound}.

\begin{proof}[Proof of Theorem~\ref{thm::upper_bound}]
	When $N \ge (n\wedge p)\log^2(n+p)$,  we choose $\lambda$ of the following form
	\begin{equation}\label{eq::lambda}
		\lambda = C_2 \sqrt{\frac{\log(n+p)}{N(n\wedge p)}}
	\end{equation}
	where  $C_2>0$ is a constant with $C_2 \ge 4c_1$,  where $c_1$ is defined in Lemma~\ref{lem::op_bound} that only depends on $\sigma$ and $\|M\|_\infty$.  Following from \eqref{eq::upper_bound0}, then
	\begin{equation}\label{eq::upper_bound1}
		\begin{aligned}
			\frac{1}{np}\|\widehat M-M\|_F^2 \le C_2^2\left(\frac{1+\sqrt{2}}{2}\right)^2\frac{(n\vee p)\log(n+p)}{N}J^*(\epsilon) + \epsilon^2.
		\end{aligned}
	\end{equation}
	holds with probability $1-2(n+p)^{-1}$. 
	
	This completes the proof.
\end{proof}

\section{Proof of Lemma~\ref{lem::J_star_bound}}\label{proof_J_star_bound}

We begin with an outline of the proof. To form our set of basis functions, we will tessellate our domain $\mathbb{X}^K$ with $\infty$-norm balls, and use a Taylor series centered at an arbitrary point within each ball to get a uniform approximation for functions in that ball. For a fixed center point, the Taylor series is a linear combination of fixed basis functions. To obtain our full set of basis functions, we will collect all of the terms in all of those Taylor series. We now formalize this:

\begin{proof}
	For functions satisfying Condition~\eqref{cond::bound_derivative}, we consider a Taylor series approximation to $f \in \mathcal{F}(L,\gamma,K)$ of order $L$ at a point $\mathbf{x^0} \in \mathbb{R}_{[0,1]}^K$, that is \[
	T_{\mathbf{x}^0}f(\mathbf{x}) = f(\mathbf{x^0}) + \sum_{l\le L-1} \frac{1}{l!} \nabla^lf(\mathbf{x^0})(\mathbf{x}-\mathbf{x^0})^l,
	\] 
	where $l!=l_1! \ldots l_k!$, $\nabla^l f(\mathbf{x}) = \frac{\partial^l f}{\partial x_1^{l_1}\cdots \partial x_k^{l_k} }$ and $\mathbf{x}^l = x_1^{l_1}\cdots x_k^{l_k}$ over all combinations with $l_1+ \cdots +l_k=l$. There exists $\mathbf{x'} = (x_1',...,x_K')^T \in \mathbb{R}_{[0,1]}^K$ in a neighborhood of radius $\|\mathbf{x} -\mathbf{x}^0\|_2$ centered at $\mathbf{x}^0$ such that the approximation error obeys
	\begin{equation}\label{eq::taylor_approx}
		\begin{aligned}
			\left|f(\mathbf{x}) - T_{\mathbf{x}^0}f(\mathbf{x})\right| &\leq \left|\sum_{L_1+ \cdots +L_K=L}\frac{1}{L_1!\ldots L_k!}
			\times \frac{\partial^L f(\mathbf{x'})}{\partial {x_1'}^{L_1}\cdots \partial {x_K'}^{L_K} } |x_1-x_1^0|^{L_1}\cdots|x_K-x_K^0|^{L_K}\right|\\
			\underrightarrow{\text{Condition~\ref{cond::bound_derivative}}}
			& \le \gamma \left|\sum_{L_1+ \cdots +L_K=L}\frac{|x_1-x_1^0|^{L_1}\cdots|x_K-x_K^0|^{L_K} }{L_1!\ldots L_k!} \right| \\
			\underrightarrow{\text{Multinomial Theorem}} 
			& = \frac{\gamma}{L!} \left( |x_1-x_1^0|+ ... + |x_K-x_K^0|\right)^L
		\end{aligned}
	\end{equation}

	If we consider the approximation error within an $\infty$-norm ball of radius $d$ (and choose any point in that ball as $\mathbf{x}^0$), then $|x_k-x_k^0|\le d$ for $k=1,...,K$.  \eqref{eq::taylor_approx} has
	\begin{equation}\label{eq::taylor_approx_error}
		\left|f(\mathbf{x}) - T_{\mathbf{x}^0}f(\mathbf{x})\right| \le  \frac{\gamma}{L!} K^Ld^L.
	\end{equation}

	Thus, to get an approximation error of $\epsilon$, let $\frac{\gamma}{L!} K^Ld^L = \epsilon$, we need to divide the space into balls of radius
	\begin{equation}\label{eq::radius}
		d = \sqrt[L]{\frac{L!}{\gamma K^L}} \times \epsilon^{1/L}.
	\end{equation}

	As the support $\mathbb{R}_{[0,1]}^K$ is bounded by 1, we need $(1/d)^K$ balls with radius $d$ (in $\infty$-norm) to cover the entirety of $\mathbb{X}^K$, resulting in  $\binom{K+L}{L}(1/d)^K $ total terms to get an approximation error $\epsilon$ (above Taylor series approximation contains $\binom{K+L}{L}$ terms). If we select balls of radius $d$ in \eqref{eq::radius}, this gives us a total number of terms in our linear expansion
	\[
	J^*(\epsilon) = \binom{K+L}{L}\left(\frac{L!}{\gamma K^L}\right)^{-K/L} \epsilon^{-K/L}.
	\]
	That is, $J^*(\epsilon) = O\left(\epsilon^{-K/L} \right)$.
\end{proof}

\section{Proof of Theorem~\ref{thm::upper_bound2}}\label{proof_upper_bound2}

The proof of this theorem is quite straightforward by connecting a few pieces we have already built.

\begin{proof}
	Given Condition~\ref{cond::bound_derivative} and Lemma~\ref{lem::J_star_bound}, we have $J^*(\epsilon) = C_3 \epsilon^{-\frac{K}{L}}$ for some constant $C_3$ relying on $\gamma,K$, and $L$. Plugging in this to the upper bound in Theorem~\ref{thm::upper_bound}, the upper bound~\eqref{eq::upper_bound1} then becomes
	\begin{equation}\label{eq::upper_bound2}
		C_2^2C_3\left(\frac{1+\sqrt{2}}{2}\right)^2\frac{(n\vee p)\log(n+p)}{N} \epsilon^{-\frac{K}{L}} + \epsilon^2,
	\end{equation}
	which is optimized at
	\begin{equation}\label{eq::optimize_epsilon}
		\begin{aligned}
			&\frac{(n\vee p)\log(n+p)}{N} \epsilon^{-\frac{K}{L}} = \epsilon^2\\
			\Rightarrow ~~ & \epsilon = \left(\frac{(n\vee p)\log(n+p)}{N}\right)^{\frac{L}{2L+K}}.
		\end{aligned}
	\end{equation}
	So, we have
	\begin{equation}\label{eq::upper_bound3}
		\frac{1}{np}\|\widehat{M} - M\|_F^2 \le C^* \left(\frac{(n\vee p)\log(n+p)}{N}\right)^{\frac{2L}{2L+K}}
	\end{equation}
	with probability at least $1-2(n+p)^{-1}$ with $C^* = C_2^2C_3 \left(\frac{1+\sqrt{2}}{2}\right)^2 + 1$. Equivalently, we can say
	\[
	\frac{1}{np}\|\widehat M - M\|^2_F =O_P\left(\left[\frac{(n\vee p)\log(n+p)}{N}\right]^{\frac{2L}{2L+K}}\right),
	\]
	as claimed.
\end{proof}

\section{Deriving the Minimax Lower Bound}\label{proof_lower_bound}

In this section, we derive the minimax lower bound for estimation within $M(L,\gamma,K)$: We show that the convergence rate in Theorem~\ref{thm::upper_bound2} is optimal (up to log terms). 

Recall that we assume the true $M$ belongs to the following class of matrices:
\begin{equation}\label{eq::supp_mat_class}
	\mathcal M(L,\gamma,K) := \{M \in \mathbb{R}^{n\times p}: m_{ij} = f_j(\bm{\theta}_{i,\cdot}), ~ \bm{\theta}_{i,\cdot} \in \mathbb{R}^{K}_{[0,1]} , f_j \in \mathcal{F}(L,\gamma, K), \forall j \le p\},
\end{equation}
where $\mathcal F(L,\gamma,K)$ is a class of functions with bounded derivatives:
\begin{equation}\label{eq::func_class}
	\mathcal F(L,\gamma,K) := \left\{f: \left|\frac{\partial^L}{\partial x_1^{L_1}\cdots x_K^{L_K}} f(\mathbf{x}) \bigg\vert_{\mathbf{x}=\mathbf{x^0}}\right| \leq \gamma, ~ \forall \mathbf{x}^0 \in \mathbb{R}^{K}_{[0,1]},~ \sum_{k=1}^{K}L_k=L\right\}.
\end{equation}

For simplicity of notation, let $\bm{\theta}_{i} := \bm{\theta}_{i,\cdot} \in \mathbb{R}^K_{[0,1]}$ denote the $i$-th row vector of the embeddings $\Theta \in \mathbb{R}^{n \times K}$ in this section.

We shall obtain the lower bound based on information theory. The bound is with respect to $\|\cdot\|_F^2$-risk. We pose things in terms of the error in a multi-way hypothesis testing problem, where the set of testing hypotheses should be a suitably large packing set for $\mathcal{M}(L,\gamma,K)$. In this section, we first show the existence of such a suitably large packing set. Then, we apply Yang's method \citep{yang1999information} to prove the main results in Theorem~\ref{thm::lower_bound}.

\subsection{Constructing the $2\delta_{N,n,p}$-packing Set}

For $M \in \mathcal{M}(L,\gamma, K)$, the risk of the estimator can be written as

\[
\frac{1}{np}\|\widehat{M} - M\|_F^2 = \frac{1}{np} \sum_{i=1}^{n} \sum_{j=1}^{p}\left[\hat{m}_{ij} - f_j(\bm{\theta}_i)  \right]^2.
\]

This is to say, bounding $\frac{1}{np}\|\widehat{M} - M\|_F^2$ can be viewed as a classical nonparametric regression problem. So, we follow the construction of many hypotheses as in Section~2.6 of \cite{tsybakov2009introduction}. However, here we are working in a multi-dimensional setting, i.e., $\bm{\theta}_i \in \mathbb{R}^{K}$, $K \ge 1$.

In giving our packing set, we will work with combinations of ``bump functions''. To define these, we need an archetypal ingredient --- the bump functions that we will use:
\begin{equation}\label{eq::univar_func}
	\varphi(u) = c_L e \times \exp \left ( -\frac{1}{1-4u^2} \right ), \quad u \in  (-1/2, 1/2),
\end{equation}
which is infinitely differentiable and vanishes outside of $(-1/2,1/2)$; $c_L>0$ is a tiny constant that only depends on $L$ such that $|\partial^l \varphi(u) / \partial u^l | \le 1$, $\forall l=0,1,...,L$. Meanwhile, since $ \int_{-1/2}^{1/2}e^2 \exp^2 \left ( -\frac{1}{1-4u^2} \right ) du >0.49$, (it is actually very close to 0.5), we have $\|\varphi\|_2^2 := \int_{-1/2}^{1/2}\varphi^2(u)du > 0.49 c_L^2$. In addition, the maximum value of this function is $\sup_u|\varphi(u)| = \varphi (0) = c_L$.

Now, we shall work under the multidimensional setting. We use bold letters to refer to multivariate indices and regular letters to refer to the indices of each coordinate. Let $\bm i = (i_1,...,i_K) \in \{1,2,...,\sqrt[K]{n}\}^K$ having $\sum_{\bm i=(i_1,...,i_K)} 1 = \sum_{i_1=1}^{\sqrt[K]{n}}...\sum_{i_K=1}^{\sqrt[K]{n}}1=n$, where $\sqrt[K]{n}$ is assumed to be an integer. Suppose that the observed embeddings follows a fixed equispaced design, i.e., $\bm\theta_{\bm i} = \bm\theta_{(i_1,...,i_K)}= (\theta_{i_1},...,\theta_{i_K})^T = (\frac{i_1}{\sqrt[K]{n}}, ..., \frac{i_K}{\sqrt[K]{n}})^T$. Consider a multivariate function $\Phi_{\bm d}:\mathbb{R}^K\to\mathbb{R}$,
\begin{equation}\label{eq::multivar_func}
	\begin{aligned}
		\Phi_{\bm d}(\bm \theta_{\bm i}) 
		& =  \gamma b^{-L/K} \prod_{k=1}^{K}\varphi_{d_k}(\theta_{i_k}) \\
		& := \gamma b^{-L/K} \prod_{k=1}^{K}\varphi(\sqrt[K]{b}\theta_{i_k}-d_k+1/2),
	\end{aligned}
\end{equation}
where $\bm d = (d_1,...,d_K) \in \{1,2,...,\sqrt[K]{b}\}^K$. Here $b\ge 1$ is an integer that depends on $N,n,p$ and some constant $c_0$, and will be specified later. $\varphi(u)$ is defined in \eqref{eq::univar_func}. Then, we have the following technical lemma for $\Phi_{\bm d}$, which will later be used for constructing the packing set.

\begin{lemma}\label{lem::Phi_d}
	Suppose $\varphi(\cdot)$ are given by \eqref{eq::univar_func}. Then, $\Phi_{\bm d}$ has the following properties:
   \begin{enumerate}[label=(\roman*)]
		\item $\Phi_{\bm d} (\mathbf{x}) \in \mathcal{F}(L,\gamma,K)$.
		\item $\Phi_{\bm d}$ have disjoint support for different $\bm d$.
		\item There exist $C_{1,L,K}>0$ and $C_{2,L,K}>0$ only dependent on $L$ and $K$, for any given $\bm d$, $\Phi_{\bm d}$ has
		\[
		\gamma^2 C_{2,L,K} b^{-\frac{2L+K}{K}}  \le  \frac{1}{n}\sum_{\bm i=(i_1,...,i_K)}\Phi_{\bm d}^2(\bm \theta_{\bm i})   \le  \gamma^2  C_{1,L,K} b^{-\frac{2L+K}{K}}   
		\]
		when integer $b$ satisfies $1\le b \le 0.48^Kn$.
	\end{enumerate}
\end{lemma}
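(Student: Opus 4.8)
\textbf{Proof proposal for Lemma~\ref{lem::Phi_d}.}

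The plan is to verify the three properties in order, each by reducing the multivariate statement to the univariate facts already recorded for $\varphi$. For property (i), I would differentiate the product $\Phi_{\bm d}(\mathbf x) = \gamma b^{-L/K}\prod_{k=1}^K \varphi_{d_k}(x_k)$. Since each factor depends on a single coordinate, a mixed partial $\partial^L/\partial x_1^{L_1}\cdots\partial x_K^{L_K}$ with $\sum_k L_k = L$ simply acts factor-by-factor: it equals $\gamma b^{-L/K}\prod_k (\sqrt[K]{b})^{L_k}\,\varphi^{(L_k)}(\sqrt[K]{b}x_k - d_k + 1/2)$. The dilation contributes $\prod_k (\sqrt[K]{b})^{L_k} = b^{L/K}$, which cancels the prefactor $b^{-L/K}$, and each $|\varphi^{(L_k)}|\le 1$ by the choice of $c_L$, so the whole thing is bounded by $\gamma$. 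This is the routine step. (One should also note $\Phi_{\bm d}$ is supported in $[0,1]^K$, so it genuinely lies in $\mathcal F(L,\gamma,K)$.)

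For property (ii), the factor $\varphi_{d_k}(x_k) = \varphi(\sqrt[K]{b}x_k - d_k + 1/2)$ is supported on $x_k \in ((d_k-1)/\sqrt[K]{b},\, d_k/\sqrt[K]{b})$ since $\varphi$ vanishes outside $(-1/2,1/2)$. Hence $\Phi_{\bm d}$ is supported on the box $\prod_k((d_k-1)/\sqrt[K]{b}, d_k/\sqrt[K]{b})$, and these boxes are pairwise disjoint (interiors) for distinct $\bm d \in \{1,\dots,\sqrt[K]{b}\}^K$, giving disjoint supports.

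Property (iii) is the technical heart, and the main obstacle: I need to control the discrete average $\frac1n\sum_{\bm i}\Phi_{\bm d}^2(\bm\theta_{\bm i})$ by the continuous integral $\int_{[0,1]^K}\Phi_{\bm d}^2 = \gamma^2 b^{-2L/K}\prod_k \int \varphi_{d_k}^2 = \gamma^2 b^{-2L/K}\cdot b^{-1}\|\varphi\|_2^{2K} = \gamma^2 b^{-(2L+K)/K}\|\varphi\|_2^{2K}$ (the change of variables $u = \sqrt[K]{b}x_k - d_k + 1/2$ gives each one-dimensional integral a factor $b^{-1/K}\|\varphi\|_2^2$). Because $\Phi_{\bm d}$ is a smooth function supported in a box of side $b^{-1/K}$, and the design points $\bm\theta_{\bm i}$ form an equispaced grid of spacing $n^{-1/K}$, the number of grid points landing in the support is $\asymp (b^{-1/K}/n^{-1/K})^K = n/b$, and on each such point $\Phi_{\bm d}^2 \le \gamma^2 b^{-2L/K}\|\varphi\|_\infty^{2K} = \gamma^2 b^{-2L/K} c_L^{2K}$; this already gives the upper bound $\frac1n\sum_{\bm i}\Phi_{\bm d}^2(\bm\theta_{\bm i}) \le \gamma^2 C_{1,L,K} b^{-(2L+K)/K}$ with $C_{1,L,K}$ depending only on $c_L$ (hence $L$) and $K$, once one checks the count of interior grid points is $\le$ a constant times $n/b$. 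For the lower bound I would argue the sum dominates a Riemann-type sum: restricting attention to a sub-box of the support where $\varphi^2$ is bounded below by a constant (e.g.\ where each $|\sqrt[K]{b}x_k - d_k + 1/2|\le 1/4$, on which $\varphi(u)^2 \ge c_L^2 e^2 \exp(-4/3) =: c_L^2 \eta^2 > 0$), the condition $b \le 0.48^K n$ guarantees $b^{-1/K} \ge (0.48)^{-1} n^{-1/K} > 2 n^{-1/K}$, so this sub-box of side $\tfrac12 b^{-1/K}$ still contains at least $\asymp n/b$ grid points; summing the lower bound $\gamma^2 b^{-2L/K} c_L^2\eta^2$ over those points and dividing by $n$ yields $\gamma^2 C_{2,L,K} b^{-(2L+K)/K}$. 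The only real care needed is the grid-counting: making sure the constants $C_{1,L,K}, C_{2,L,K}$ are genuinely free of $n,p,b$ and that the hypothesis $1\le b\le 0.48^K n$ is exactly what is needed so that each support box (and the relevant sub-box) contains a number of design points that is bounded above and below by constant multiples of $n/b$.
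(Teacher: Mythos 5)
Your proposal is correct, and parts (i) and (ii) follow the paper's argument (your (i) is in fact more explicit than the paper's, since you track the chain-rule factor $\prod_k(\sqrt[K]{b})^{L_k}=b^{L/K}$ and show it exactly cancels the prefactor $\gamma b^{-L/K}$, which is the whole reason for that normalization). Where you genuinely diverge is the lower bound in (iii). The paper works coordinatewise and compares the rescaled Riemann sum $\sqrt[K]{b/n}\sum_{i_k}\varphi^2(\sqrt[K]{b/n}\,i_k-d_k+1/2)$ with the integral $\|\varphi\|_2^2$, using the symmetry and monotone decay of $\varphi$ so that the sum dominates $\|\varphi\|_2^2-\sqrt[K]{b/n}\,c_L^2> c_L^2(0.49-\sqrt[K]{b/n})$, and then invokes $b\le 0.48^K n$ to make this at least $(0.1c_L)^2$ per coordinate. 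You instead use a pointwise bound: restrict to the central sub-box $|\sqrt[K]{b}x_k-d_k+1/2|\le 1/4$, bound $\varphi^2$ below there by a constant depending only on $L$, and count grid points. Both are valid; yours is more elementary (no need for $\|\varphi\|_2^2>0.49c_L^2$ or monotonicity), while the paper's avoids the sub-box counting and yields a cleaner constant tied to $\|\varphi\|_2^2$. For your route to be airtight you should make the count explicit: with $b\le 0.48^K n$ one has $\sqrt[K]{n/b}\ge 1/0.48$, so each coordinate interval of length $\tfrac12 b^{-1/K}$ contains at least $\lfloor\tfrac12\sqrt[K]{n/b}\rfloor\ge(\tfrac12-0.48)\sqrt[K]{n/b}=0.02\,\sqrt[K]{n/b}$ design points, giving at least $0.02^K\,n/b$ points in the sub-box (and at most $(\sqrt[K]{n/b}+1)^K\le 2^K n/b$ in the full support box for the upper bound); this is exactly the role the hypothesis plays in the paper as well. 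One small slip: on the sub-box the correct bound is $\varphi^2(u)\ge c_L^2e^2\exp(-8/3)=c_L^2e^{-2/3}$, not $c_L^2e^2\exp(-4/3)$ (the latter exceeds $\sup\varphi^2=c_L^2$); this only changes the constant $C_{2,L,K}$ and does not affect the argument.
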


\begin{proof}
	For $\varphi(\cdot)$ in \eqref{eq::univar_func}, we have $|\frac{\partial^{l}}{\partial u^{l} }\varphi(u) | \le 1$, $\forall l=0,1,...,L$ such that 
	$\left|\frac{\partial^L}{\partial x_1^{L_1}\cdots x_K^{L_K}} \Phi_{\bm d} (\mathbf{x})\right| \leq \gamma$ holds for any $L_1+...+L_K=L$ for $\mathbf{x} \in \mathbb{R}^K_{[0,1]}$. Thus, $\Phi_{\bm d} (\mathbf{x}) \in \mathcal{F}(L,\gamma,K)$.

	Given that $\varphi(u)>0$ if and only if $u\in(-1/2,1/2)$, we have $\varphi_{d_k}(x) \equiv \varphi(\sqrt[K]{b}x-d_k+1/2)>0$ if and only if $x \in\left(\frac{d_k-1}{\sqrt[K]{b}},\frac{d_k}{\sqrt[K]{b}}\right)$ for $d_k \in \{1,...,\sqrt[K]{b}\}$. So, for each, we can divide the space $[0,1]$ into $\sqrt[K]{b}$ intervals, i.e.,
	\[
	\Delta_{1} = \left[0, \frac{1}{\sqrt[K]{b}}\right],~~\Delta_{d_k} = \left(\frac{d_k-1}{\sqrt[K]{b}},\frac{d_k}{\sqrt[K]{b}}\right], ~d_k=2,...,\sqrt[K]{b},
	\]
	such that $\Delta_{d_k} \cap \Delta_{d_k'}=\emptyset$ for $d_k\neq d_k'$ and $\cup_{d_k}\Delta_{d_k} = [0,1]$. Thus, $\varphi_{d_k}(x)$ have disjoint support and their support union is the unit interval. 
	
	Because $\Phi_{\bm d}$ is the product of $\varphi_{d_k}$, they also have disjoint supports. That is, for each $\bm{d}$, $\Phi_{\bm d}(\mathbf{x}) >0$ only when  $\mathbf{x} \in \Delta_{\bm d}$ where
	\[
	\begin{aligned}
		\Delta_{\bm d = (1,1,...,1)} &= \left[0, \frac{1}{\sqrt[K]{b}}\right]\times...\times  \left[0, \frac{1}{\sqrt[K]{b}}\right],\\
		\Delta_{\bm d=(d_1,...d_K)} & = \left(\frac{d_1-1}{\sqrt[K]{b}},\frac{d_1}{\sqrt[K]{b}}\right]\times ...\times \left(\frac{d_K-1}{\sqrt[K]{b}},\frac{d_K}{\sqrt[K]{b}}\right],
	\end{aligned}
	\] 
	$d_k=2,...,\sqrt[K]{b}$ for $k=1,...,K$, such that $\Delta_{\bm d} \cap \Delta_{\bm d'}=\emptyset$ if $\bm d \neq \bm d'$ and $\cup_{\bm d}\Delta_{\bm d}=[0,1]^K$. So, the space $[0,1]^K$ is divided into $b$ disjoint cubes.

	As for $(iii)$,  we know there exists a constant $c_L$ that only depends on $L$ such that $\sup_{u} |\varphi(u) |= \varphi(0) = c_L$, and $\|\varphi\|_2^2 > 0.49 c_L^2$. Then
	
	\begin{equation}\label{eq::upper_Phi_d}
		\begin{aligned}
			\frac{1}{n}\sum_{\bm i=(i_1,...,i_K)}\Phi_{\bm d}^2(\bm \theta_{\bm i}) 
			& = \frac{1}{n}\gamma^2b^{-2L/K}\sum_{ i_1=1}^{\sqrt[K]{b}}\cdots \sum_{ i_K=1}^{\sqrt[K]{b}}  \prod_{k=1}^{K} \varphi^2\left(\sqrt[K]{\frac{b}{n}}i_k-d_k+1/2\right)\\
			& = \frac{1}{n}\gamma^2b^{-2L/K}\sum_{ i_2=1}^{\sqrt[K]{b}}\cdots \sum_{ i_K=1}^{\sqrt[K]{b}}  \prod_{k=2}^{K} \varphi^2\left(\sqrt[K]{\frac{b}{n}}i_k-d_k+1/2\right) \\
			& ~~~~~~~~~~~~~~~~~~~~~~~~~~~~\times \left[\sum_{ i_1=1}^{\sqrt[K]{b}}\varphi^2\left(\sqrt[K]{\frac{b}{n}}i_1-d_1+1/2\right)\right]\\
			& = \frac{1}{n}\gamma^2b^{-2L/K}\sum_{ i_3=1}^{\sqrt[K]{b}}\cdots \sum_{ i_K=1}^{\sqrt[K]{b}}  \prod_{k=3}^K\varphi^2\left(\sqrt[K]{\frac{b}{n}}i_k-d_k+1/2\right) \\
			& ~~~~~~\times \left[\sum_{ i_1=1}^{\sqrt[K]{b}}\varphi^2\left(\sqrt[K]{\frac{b}{n}}i_1-d_1+1/2\right)\right] \times \left[\sum_{ i_2=1}^{\sqrt[K]{b}}\varphi^2\left(\sqrt[K]{\frac{b}{n}}i_2-d_2+1/2\right)\right]\\
			& \cdots\\
			& = \frac{1}{n} \gamma^2b^{-2L/K}\prod_{k=1}^{K}\left\{ \sum_{ i_k=1}^{\sqrt[K]{n}} \varphi^2\left(\sqrt[K]{\frac{b}{n}} i_k- d_k+1/2\right) \right\}\\
			& = \frac{1}{n}  \gamma^2b^{-2L/K} \prod_{k=1}^{K}\left\{ \sum_{\sqrt[K]{\frac{n}{b}}(d_k-1) < i_k \le\sqrt[K]{\frac{n}{b} }d_k } \varphi^2\left(\sqrt[K]{\frac{b}{n}} i_k-d_k+1/2\right) \right\} \\
			& \le \frac{1}{n} \gamma^2b^{-2L/K} \prod_{k=1}^{K}\left\{ \sqrt[K]{\frac{n}{b}}  \times \varphi^2(0)\right\} \\
			& =  \gamma^2b^{-\frac{2L+K}{K}}   c_L^{2K}. 
		\end{aligned}
	\end{equation}
	Therefore, $\frac{1}{n}\sum_{\bm i=(i_1,...,i_K)}\Phi_{\bm d}^2(\bm \theta_{\bm i}) \le c_L^{2K} \gamma^2b^{-\frac{2L+K}{K}}$ and $c_{L}^{2K}$ is the constant we find for $C_{1,L,K}$.
	
	On the other hand, we use the fact that the upper Riemann sum is greater than the integral of the function. Thus, for each coordinate,
	\[
	\begin{aligned}
		\sqrt[K]{\frac{b}{n}} \sum_{ i_k=1}^{\sqrt[K]{n}} \varphi^2\left(\sqrt[K]{\frac{b}{n}} i_k- d_k+1/2\right) 
		& =  \sqrt[K]{\frac{b}{n}} \sum_{\sqrt[K]{\frac{n}{b}}(d_k-1) < i_k \le\sqrt[K]{\frac{n}{b} }d_k } \varphi^2 \left[ 	\sqrt[K]{\frac{b}{n}} \left( i_k - \frac{\sqrt[K]{n} (d_k - 1/2)}{\sqrt[K]{b}}\right)   \right] \\
		& = \sqrt[K]{\frac{b}{n}}  \sum_{\frac{-\sqrt[K]{n}}{2\sqrt[K]{b}} < t \le \frac{\sqrt[K]{n}}{2\sqrt[K]{b}}}\varphi^2\left(\sqrt[K]{b/n}\times t\right) \\	\underrightarrow{\left[\varphi(-u)=\varphi(u)\right]} 
		& = 	\sqrt[K]{\frac{b}{n}}  \left[ 2\sum_{0\le t \le \frac{\sqrt[K]{n}}{2\sqrt[K]{b}}}\varphi^2(\sqrt[K]{b/n}\times t) - \varphi^2(0)\right]\\
		\underrightarrow{\left[\varphi(u) \text{ decreases for } u\ge0\right]}
		& \ge  2\sqrt[K]{\frac{b}{n}}  \int_0^\frac{\sqrt[K]{n}}{2\sqrt[K]{b}} \varphi^2(\sqrt[K]{b/n}\times t) dt - \sqrt[K]{\frac{b}{n}} \varphi^2(0)\\
		& = 2  \int_0^{1/2} \varphi^2(u) du - \sqrt[K]{\frac{b}{n}} c_L^2\\
		& = \|\varphi \|_2^2 - \sqrt[K]{\frac{b}{n}} c_L^2\\
		& > c_L^2\left(0.49 - \sqrt[K]{b/n} \right)\\
		\underrightarrow{\left[1\le b \le  0.49^{K}n\right]}
		& \ge 0,
	\end{aligned}
	\]

	Thus, the empirical sum can also be lower bounded by
	\begin{equation}
		\begin{aligned}
			\frac{1}{n}\sum_{\bm i=(i_1,...,i_K)}\Phi_{\bm d}^2(\bm \theta_{\bm i})
			&= \gamma^2b^{-\frac{2L+K}{K}}\prod_{k=1}^{K}\left\{  \sqrt[K]{\frac{b}{n}}\sum_{ i_k=1}^{\sqrt[K]{n}} \varphi^2\left(\sqrt[K]{\frac{b}{n}} i_k- d_k+1/2\right) \right\}\\
			& \ge \gamma^2b^{-\frac{2L+K}{K}}c_L^{2K}\left(0.49 - \sqrt[K]{b/n} \right)^K 
		\end{aligned}
	\end{equation}

	When $1\le b\le 0.48^Kn$, 
	\begin{equation}\label{eq::lower_Phi_d}
		\frac{1}{n}\sum_{\bm i=(i_1,...,i_K)}\Phi_{\bm d}^2(\bm \theta_{\bm i}) \ge \gamma^2(0.1c_L)^{2K} b^{-\frac{2L+K}{K}},
	\end{equation}
	and thus $(0.1c_{L})^{2K}$ is the constant we find for $C_{2,L,K}$.
	Combining \eqref{eq::upper_Phi_d} and \eqref{eq::lower_Phi_d}, we have
	\[
	\gamma^2 C_{2,L,K} b^{-\frac{2L+K}{K}}  \le  \frac{1}{n}\sum_{\bm i=(i_1,...,i_K)}\Phi_{\bm d}^2(\bm \theta_{\bm i})   \le  \gamma^2  C_{1,L,K} b^{-\frac{2L+K}{K}}   
	\]
	as claimed.
\end{proof}

In proving the lower bound, we shall use Fano's method (see Section 15.3.2 in \cite{wainwright2019high}). To do so, we first establish the connection between minimax risks and error probabilities in testing problems (for completeness), and then apply Fano's inequality to lower bound the error probabilities. To this end, we first provide the following lemma, which shows that there exists a packing set of hypotheses with suitably large cardinality, for which the mutual information (stated in terms of Kullback-Leibler divergence) can be upper bounded. We can then use Fano's inequality with this set.

\begin{lemma}\label{lem::packing_set}
	Consider an arbitrary fixed $L$, $\gamma$ and $K$. For some constant $C_{1,L,K}$ and $C_{2,L,K}$  that only depends on $L$ and $K$, and for some other constant $c_0>0$, there exists a subset $\mathcal{B}^0 \subseteq \mathcal{M}(L,\gamma,K)$ with cardinality
	\[
	|\mathcal{B}^0| \ge 2^{\lceil c_0 \left(\frac{n\vee p}{N}\right)^{\frac{-K}{2L+K}}\rceil \times p/8} + 1,
	\]
	when $p\ge 8$, that has the following properties:
	\begin{enumerate}[label=(\roman*)]
		\item $\mathcal{B}^0$ is a $2\delta_{N,n,p}$-packing set, i.e., for any $M_{s} \neq M_{s'} \in \mathcal{B}^0$, 
		\[
		\frac{1}{np}\|M_{s} - M_{s'}\|_F^2 \ge 2\delta_{N,n,p} =  \frac{C_{2,L,K}  \gamma^2}{8}  (2c_0)^{-2L/K} \left(\frac{n\vee p}{N}\right)^{\frac{2L}{2L+K}}
		\]
		when $ c_0^{-\frac{2L+K}{K}}(n\vee p) \le N \le c_0^{-\frac{2L+K}{K}} 0.48^{2L+K} (n\vee p)n^{\frac{2L+K}{K}}$. 
		\item For any $M_{s},  M_{s'} \in \mathcal{B}^0$,
		\[
		K(\mathbb{P}_s||\mathbb{P}_{s'}) \le \frac{C_{1, L,K} \gamma^2}{2\sigma^2} c_0^{-\frac{2L}{K}}  N \left(\frac{n\vee p}{N}\right)^{\frac{2L}{2L+K}} 
		\]
		where $K(\mathbb{P}_s||\mathbb{P}_{s'})$ denotes the Kullback-Leibler divergence between probability distributions of observations $\{(y_t,X_t)\}_{t=1}^N$ satisfying model \eqref{eq::supp_data_model}, given $M_{s}$ and $M_{s'} $ respectively. 
	\end{enumerate}
\end{lemma}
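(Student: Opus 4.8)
The plan is to combine the disjoint ``bump'' matrices supplied by Lemma~\ref{lem::Phi_d} with a Varshamov--Gilbert argument on binary arrays. First I would fix the integer $b := \left\lceil c_0 \left(\tfrac{n\vee p}{N}\right)^{-K/(2L+K)}\right\rceil$ and extract two consequences of the hypotheses on $N$: the lower bound $N \ge c_0^{-\frac{2L+K}{K}}(n\vee p)$ makes $c_0\left(\tfrac{n\vee p}{N}\right)^{-K/(2L+K)} \ge 1$, hence $b \le 2c_0\left(\tfrac{n\vee p}{N}\right)^{-K/(2L+K)}$, while the upper bound $N \le c_0^{-\frac{2L+K}{K}}0.48^{2L+K}(n\vee p)n^{\frac{2L+K}{K}}$ makes $c_0\left(\tfrac{n\vee p}{N}\right)^{-K/(2L+K)} \le 0.48^K n$, hence $1\le b \le 0.48^K n$, which is exactly the range where Lemma~\ref{lem::Phi_d}(iii) applies. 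For each binary array $\Omega = (\omega_{\bm d, j}) \in \{0,1\}^{b\times p}$ I would set $M_\Omega \in \mathbb{R}^{n\times p}$ with $(M_\Omega)_{ij} = \sum_{\bm d}\omega_{\bm d,j}\Phi_{\bm d}(\bm\theta_{i})$ on the equispaced embedding. Because the $\Phi_{\bm d}$ have disjoint supports and each lies in $\mathcal{F}(L,\gamma,K)$ (Lemma~\ref{lem::Phi_d}(i)--(ii)), every column map $\mathbf x \mapsto \sum_{\bm d}\omega_{\bm d,j}\Phi_{\bm d}(\mathbf x)$ again lies in $\mathcal{F}(L,\gamma,K)$, so each $M_\Omega \in \mathcal{M}(L,\gamma,K)$.

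Next I would turn matrix distances into Hamming distances. Since the supports are disjoint, for each row $i$ at most one $\Phi_{\bm d}(\bm\theta_{i})$ is nonzero, so for any $\Omega,\Omega'$,
\[
  \frac{1}{np}\left\|M_\Omega - M_{\Omega'}\right\|_F^2 = \frac{1}{np}\sum_{j=1}^{p}\sum_{\bm d}(\omega_{\bm d,j}-\omega'_{\bm d,j})^2\sum_{\bm i}\Phi_{\bm d}^2(\bm\theta_{\bm i}),
\]
and Lemma~\ref{lem::Phi_d}(iii) sandwiches $\tfrac1n\sum_{\bm i}\Phi_{\bm d}^2(\bm\theta_{\bm i})$ between $\gamma^2 C_{2,L,K}b^{-\frac{2L+K}{K}}$ and $\gamma^2 C_{1,L,K}b^{-\frac{2L+K}{K}}$; since the entries are $0/1$, $\sum_{j,\bm d}(\omega_{\bm d,j}-\omega'_{\bm d,j})^2 = \rho_H(\Omega,\Omega')$, the Hamming distance on $\{0,1\}^{b\times p}$. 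Thus $\tfrac{1}{np}\left\|M_\Omega-M_{\Omega'}\right\|_F^2$ equals, up to the constants in Lemma~\ref{lem::Phi_d}(iii), $\gamma^2 b^{-(2L+K)/K}\rho_H(\Omega,\Omega')/p$. I would then invoke the Varshamov--Gilbert bound \citep{tsybakov2009introduction} on the $bp\ge 8$ bits (this is where $p\ge 8$ enters): there exist $\Omega^{(0)},\dots,\Omega^{(Q)}$ with $Q\ge 2^{bp/8}$ and $\rho_H(\Omega^{(s)},\Omega^{(s')})\ge bp/8$ for $s\ne s'$. Set $\mathcal{B}^0 = \{M_{\Omega^{(0)}},\dots,M_{\Omega^{(Q)}}\}$; its cardinality is $Q+1 \ge 2^{bp/8}+1$, which is the claimed bound, and the lower sandwich with $\rho_H\ge bp/8$ gives $\tfrac{1}{np}\|M_{\Omega^{(s)}}-M_{\Omega^{(s')}}\|_F^2 \ge \tfrac{C_{2,L,K}\gamma^2}{8}b^{-2L/K} \ge \tfrac{C_{2,L,K}\gamma^2}{8}(2c_0)^{-2L/K}\left(\tfrac{n\vee p}{N}\right)^{\frac{2L}{2L+K}}$, using $b\le 2c_0(\tfrac{n\vee p}{N})^{-K/(2L+K)}$. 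This is part (i).

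For part (ii) I would use that under both $\mathbb{P}_s$ and $\mathbb{P}_{s'}$ the mask $X_t$ has the same law $\Pi$ and $y_t\mid X_t \sim N(\langle X_t,M\rangle,\sigma^2)$, so the chain rule for KL divergence plus the Gaussian formula give
\[
  K(\mathbb{P}_s\|\mathbb{P}_{s'}) = \sum_{t=1}^{N}\operatorname{E}_{X_t}\frac{\langle X_t, M_s-M_{s'}\rangle^2}{2\sigma^2} = \frac{N}{2\sigma^2}\cdot\frac{1}{np}\left\|M_s-M_{s'}\right\|_F^2,
\]
since $\operatorname{E}_{X\sim\Pi}\langle X,A\rangle^2 = \tfrac{1}{np}\|A\|_F^2$. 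Combining the upper sandwich with $\rho_H\le bp$ gives $\tfrac{1}{np}\|M_s-M_{s'}\|_F^2 \le C_{1,L,K}\gamma^2 b^{-2L/K}$, and $b \ge c_0(\tfrac{n\vee p}{N})^{-K/(2L+K)}$ then yields $K(\mathbb{P}_s\|\mathbb{P}_{s'}) \le \tfrac{C_{1,L,K}\gamma^2}{2\sigma^2}c_0^{-2L/K}N(\tfrac{n\vee p}{N})^{2L/(2L+K)}$, as claimed.

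The substantive geometry is already packaged in Lemma~\ref{lem::Phi_d}, so I do not expect a conceptual obstacle; the delicate point is the bookkeeping around the integer $b$. It must be chosen so as to be simultaneously a legitimate grid size for the bump construction (if one insists that $\sqrt[K]{b}$ divide $\sqrt[K]{n}$, replace $\lceil\cdot\rceil$ by the nearest admissible value, which changes only absolute constants), to lie in $[1,0.48^K n]$, and to be of exact order $c_0(\tfrac{n\vee p}{N})^{-K/(2L+K)}$ from both sides so that the \emph{same} $c_0$ governs the packing radius (through the factor $2c_0$) and the KL bound. The last two requirements are precisely what the two-sided hypothesis on $N$ encodes, so that is exactly where that assumption gets consumed.
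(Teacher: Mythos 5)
Your proposal is correct and follows essentially the same route as the paper's own proof: the same binary-array construction $M_{\Omega}=\sum_{\bm d}\omega_{\bm d,j}\Phi_{\bm d}$ built on the disjoint bumps of Lemma~\ref{lem::Phi_d}, the same Varshamov--Gilbert step with $bp\ge 8$, the same choice $b=\lceil c_0(\tfrac{n\vee p}{N})^{-K/(2L+K)}\rceil$ with the two-sided condition on $N$ consumed exactly as you describe, and the same Gaussian KL identity $K(\mathbb{P}_s\|\mathbb{P}_{s'})=\tfrac{N}{2\sigma^2 np}\|M_s-M_{s'}\|_F^2$. Your remark about $\sqrt[K]{b}$ needing to be an admissible grid size is a rounding technicality the paper also glosses over, affecting only constants.
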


\begin{proof}

	We will consider a positive integer $b$ which depends on $N, n, p$ and a constant $c_0$. The precise specification of $b$ will come later.  Consider the multivariate function $\Phi_{\bm d}(\bm{\theta})$ in \eqref{eq::multivar_func}.
	
	We will define a set $\Omega$ that is used to construct packing matrices where each element $\omega$ in $\Omega$ is a sequence (of length $b$) of diagonal matrices. We index the set in a somewhat curious way: We use a multi-index of dimension $K$ where each index has elements in $\{1,...,\sqrt[K]{b}\}$. This will ease exposition later.
	\begin{equation}
		\begin{aligned}
			\bm \Omega = \left\{\bm w  = (\bm w_{\bm d})_{\bm{d}\in \{1,...,\sqrt[K]{b}\}^K}:
			\text{for each }\bm{d},~\bm w_{\bm d}= \operatorname{diag}(w_{\bm d,1} ,...,w_{\bm d,p} ),
			w_{\bm d,j} \in \{0,1\} \right \}.
		\end{aligned}
	\end{equation}
	From this we define the following collection of matrices,
	\begin{equation}\label{eq::hypothesis_B}
		\begin{aligned}
			\mathcal{B} 
			& = \left\{M_{\bm{w}} = \sum_{d_1=1}^{\sqrt[K]{b}} \ldots\sum_{d_K=1}^{\sqrt[K]{b}} \begin{pmatrix}
				\Phi_{\bm d}(\bm{\theta}_1)w_{\bm d, 1} & \Phi_{\bm d}(\bm{\theta}_1) w_{\bm d, 2} & \ldots &\Phi_{\bm d}(\bm{\theta}_1) w_{\bm d, p}\\
				\Phi_{\bm d}(\bm{\theta}_2) w_{\bm d, 1}& \Phi_{\bm d}(\bm{\theta}_2) w_{\bm d, 2} & \ldots &\Phi_{\bm d}(\bm{\theta}_2) w_{\bm d, p}\\
				\vdots & \vdots & \ddots & \vdots\\
				\Phi_{\bm d}(\bm{\theta}_n)w_{\bm d, 1} & \Phi_{\bm d}(\bm{\theta}_n)  w_{\bm d, 2}& \ldots &\Phi_{\bm d}(\bm{\theta}_n)w_{\bm d, p} \\
			\end{pmatrix}_{n\times p}, ~~w_{\bm d, j} \in \{0,1\} \right\}\\
			& =: \left\{ M_{\bm w} = \sum_{d_1,...,d_K}\Phi_{\bm d}(\bm \Theta)\bm w_{\bm d}, \text{ for } \bm w = (\bm w_{\bm d})\in \bm \Omega  \right\}.
		\end{aligned}
	\end{equation}
	We see that we can compactly write each matrix in our set as the product of $\Phi_{\bm d}(\bm \Theta)$ and $\bm w_{\bm d}$, where $\Phi_{\bm d}(\bm \Theta)$ is a $n\times p$ matrix whose elements in the $i$-th row are all $\Phi_{\bm d}(\bm{\theta}_i)$. It is direct to check that the cardinality of $\Omega$ is given by $|\bm \Omega| = |\mathcal{B}|=2^{bp}$.

	Thus, entries of $M_{\bm w} \in \mathcal{B} $ can be written as $m_{ij} = \sum_{d_1,...,d_K}\Phi_{\bm d}(\bm{\theta}_i)w_{\bm d, j} = g_j(\bm \theta_i)$, where $g_j$ has bounded derivatives, 
	\[
	\left|\frac{\partial^L g_j(\mathbf{x})}{\partial x_1^{L_1}\cdots x_K^{L_K}} \right| \le \sum_{d_1,...,d_K} \left| \frac{\partial^L \Phi_{\bm d}(\mathbf{x}) }{\partial x_1^{L_1}\cdots x_K^{L_K}} \right| =   \left| \frac{\partial^L \Phi_{\bm d}(\mathbf{x}) }{\partial x_1^{L_1}\cdots x_K^{L_K}} \right| \mathbb{1}\{\mathbf{x} \in  \Delta_{\bm d}\}\le \gamma
	\]
	for $\forall \mathbf{x} \in \mathbb{R}^K_{[0,1]}$. Hence, $\mathcal{B} \subseteq \mathcal{M}(L,\gamma,K)$.

	Consider a set of testing hypotheses from $\mathcal{B}$, 
	\begin{equation}\label{eq::testing_set}
		\mathcal{B}^0 = \{M_{\bm w^{(0)}},...,M_{\bm w^{(S)}}\} \subseteq \mathcal{B},~~\bm w^{(s)} \in \bm \Omega, ~s=0,1,...,S,
	\end{equation}
	where $\bm w^{(s)} \neq \bm w^{(s')}$ for $0\le s \neq s' \le S$.  
	
	For any $0\le s \neq s' \le S$, and constant $C_{2,L,K}$ only dependent on $L$, 
	\begin{equation}\label{eq::2-delta}
		\begin{aligned}
			\frac{1}{np}\|M_{\bm w^{(s)}} - M_{\bm w^{(s')}}\|_F^2 
			& = \frac{1}{np}\sum_{\bm i=(i_1,...,i_K)}\sum_{j=1}^{p} \left[\sum_{d_1=1}^{\sqrt[K]{b}} \ldots\sum_{d_K=1}^{\sqrt[K]{b}}(w^{(s)}_{\bm d,j} - w^{(s')}_{\bm d, j}) \Phi_{\bm d}(\bm \theta_{\bm i})\right]^2\\
			\underrightarrow{\text{the support of } \Phi_{\bm d} \text{'s are disjoint}}& =  \frac{1}{p}\sum_{j=1}^{p} \sum_{d_1=1}^{\sqrt[K]{b}} \ldots\sum_{d_K=1}^{\sqrt[K]{b}}(w^{(s)}_{\bm d,j} - w^{(s')}_{\bm d, j})^2 \left(\frac{1}{n}\sum_{\bm i=(i_1,...,i_K)}\Phi_{\bm d}^2(\bm \theta_{\bm i}) \right) \\
			\underrightarrow{\text{Lemma~\ref{lem::Phi_d}-(i)}}
			&  \ge \gamma^2C_{2,L,K}  b^{-\frac{2L+K}{K}}  p^{-1} \rho(\bm w^{(s)}, \bm w^{(s')})
		\end{aligned}
	\end{equation}
	where $\rho(\bm w^{(s)}, \bm w^{(s')})= \sum_{j=1}^{p} \sum_{d_1=1}^{\sqrt[K]{b}} \ldots\sum_{d_K=1}^{\sqrt[K]{b}}(w^{(s)}_{\bm d,j} - w^{(s')}_{\bm d, j})^2$ is the hamming distance between $\bm w^{(s)}$ and $\bm w^{(s')}$.
	
	Due to the Varshamov–Gilbert bound (Lemma~2.9 in \cite{tsybakov2009introduction}), when $bp \ge 8$, there exists a subset $\bm\Omega^0 = (\bm w^{(0)}, ..., \bm w^{(S)} )  \subseteq \bm \Omega$ such that $S \ge 2^{bp/8}$ and $\rho(\bm w^{(s)},\bm w^{(s')}) \ge bp/8$ for $0 \le s \neq s' \le S$. Since $b\ge 1$, $p \ge 8$ is a sufficient condition to guarantee $bp \ge 8$. 
	
	Now, in particular, we choose our testing set based on $\bm\Omega^0$: That is, we place $M_{\bm w^{(s)}} \in \mathcal{B}^0$ if and only if $\bm{w}^{(s)} \in \bm{\Omega}^0$. In particular this gives us that $\rho(\bm w^{(s)},\bm w^{(s')}) \ge bp/8$. for all $w^{(s)}, w^{(s')} \in \mathcal{B}^0$ with  $s \neq s'$.
	Then, following \eqref{eq::2-delta}, we have that
	\begin{equation}\label{eq::2-delta-2}
		\frac{1}{np}\|M_{\bm w^{(s)}} - M_{\bm w^{(s')}}\|_F^2   \ge \frac{\gamma^2C_{2,L,K} }{8} b^{-\frac{2L}{K}}.
	\end{equation}

	\emph{Now, we finally give the value that we use for $b$}: Select $b=\left\lceil c_0 \left(\frac{n\vee p}{N}\right)^{\frac{-K}{2L+K}}\right\rceil$ for some constant $c_0>0$. We note that \eqref{eq::2-delta}-\eqref{eq::2-delta-2} hold only when $b \le 0.48^K n $ as stated in Lemma~\ref{lem::Phi_d}. So, we need
	\begin{equation}\label{eq::N_bound}
		N \le c_0^{-\frac{2L+K}{K}} 0.48^{2L+K} (n\vee p)n^{\frac{2L+K}{K}}.
	\end{equation}
	Furthermore, we also need 
	\begin{equation}
		N\ge c_0^{-\frac{2L+K}{K}}(n\vee p)  
	\end{equation}
	such that
	\begin{equation}
		b=\left\lceil c_0 \left(\frac{n\vee p}{N}\right)^{\frac{-K}{2L+K}}\right\rceil\le 2c_0 \left(\frac{n\vee p}{N}\right)^{\frac{-K}{2L+K}}.
	\end{equation}
	This, finally gives us
	%
	\begin{equation}\label{eq::2-delta-3}
		\frac{1}{np}\|M_{\bm w^{(s)}} - M_{\bm w^{(s')}}\|_F^2   \ge  \frac{ C_{2,L,K} \gamma^2}{8}  (2c_0)^{-2L/K} \left(\frac{n\vee p}{N}\right)^{\frac{2L}{2L+K}}=: 2\delta_{N,n,p}.
	\end{equation}
	
	Then $\mathcal{B}^0$ is a $2\delta_{N,n,p}$-packing set of $\mathcal{M}(L,\gamma, K)$ and the cardinality $|\mathcal{B}^0| = S +1 \ge 2^{bp/8} + 1 = 2^{\lceil c_0 \left(\frac{n\vee p}{N}\right)^{\frac{-K}{2L+K}}\rceil \times p/8}  + 1$ when $p\ge 8$.

	We now show the second property (related to the KL distance) of $\mathcal{B}^0$. For any matrices $M_{\bm w^{(s)}}, M_{\bm w^{(s')}} \in \mathcal{B}^0$, with the selected $b=\lceil c_0 \left(\frac{n\vee p}{N}\right)^{\frac{-K}{2L+K}}\rceil$, we have
	\begin{equation}\hspace{-1.3cm}
		\begin{aligned}
			K(\mathbb{P}_s||\mathbb{P}_{s'})  & = \int \log \frac{d\mathbb{P}_{s}}{d\mathbb{P}_{s'}}d\mathbb{P}_{s}  \\
			& =  \int\int\log\frac{\prod_{t=1}^{N}p(y_t, X_t|M_{w^{(s)}})}{\prod_{t=1}^{N}p(y_t, X_t|M_{w^{(s')}})}  \left[\prod_{t=1}^{N}p(y_t, X_t|M_{w^{(s)}})dy_tdX_t\right] \\
			\underrightarrow{[\text{Bayes' Rule}]}& = \operatorname{E}_{X\sim \Pi} \sum_{t=1}^{N}  \int \left[\log p(y_t|X_t, M_{w^{(s)}})  -\log p(y_t|X_t, M_{w^{(s')}})\right] p(y_t|X_t, M_{w^{(s)}}) dy_t\\
			\underrightarrow{(y_t|X_t, M) \sim_{i.i.d.} N[\langle X_t, M\rangle, \sigma^2]}	& =  \operatorname{E}_{X\sim \Pi} \sum_{t=1}^{N} \frac{\left\langle X_t, M_{w^{(s)}} - M_{w^{(s')}} \right\rangle^2}{2\sigma^2} \\
			\underrightarrow{\left[E_{X\sim \Pi}\langle X_t,M\rangle^2 = \frac{1}{np}\|M\|_F^2\right]}
			& = \frac{N}{2\sigma^2np} \|M_{\bm w^{(s)}} - M_{\bm w^{(s')}}\|_F^2\\
			&\le \frac{N}{2\sigma^2} \sum_{d_1=1}^{\sqrt[K]{b}} \ldots\sum_{d_K=1}^{\sqrt[K]{b}} \left(\frac{1}{n}\sum_{\bm i=(i_1,...,i_K)}\Phi_{\bm d}^2(\bm \theta_{\bm i}) \right)\\
			& \le \frac{N\gamma^2}{2\sigma^2} b^{-\frac{2L}{K}} C_{1,L,K}\\
			& \le \frac{C_{1,L,K} \gamma^2}{2\sigma^2} c_0^{-\frac{2L}{K}}  N \left(\frac{n\vee p}{N}\right)^{\frac{2L}{2L+K}}. 
		\end{aligned}
	\end{equation}
	Thus, Lemma~\ref{lem::packing_set} is proved.
\end{proof}

\subsection{Information-theoretic lower bounds}

Given Lemma~\ref{lem::packing_set}, we now apply the argument in \cite{yang1999information} to yield a lower bound for error in our estimation problem with respect to Frobenius norm.

\begin{proof}[Proof of Theorem~\ref{thm::lower_bound}]

	For a given $\delta_{N,n,p}$, let $\mathcal{B}^0$ be the $2\delta_{N,n,p}$-packing set of $\mathcal{M}(L,\gamma, K)$ indicated by Lemma~\ref{lem::packing_set}. We know that for any $M_{s} \neq M_{s'} \in \mathcal{B}^0$, 
	\[
	\frac{1}{np}\|M_{s} - M_{s'}\|_F^2 \ge 2\delta_{N,n,p} 
	\]
	with $\delta_{N,n,p}  = \frac{C_{2,L,K}  \gamma^2}{16}  (2c_0)^{-2L/K} \left(\frac{n\vee p}{N}\right)^{\frac{2L}{2L+K}}$, when $c_0^{-\frac{2L+K}{K}}(n\vee p) \le N \le c_0^{-\frac{2L+K}{K}} 0.48^{2L+K} (n\vee p)n^{\frac{2L+K}{K}}$ for some constant $c_0 > 0$.

	Let $d(M_1, M_2) = \frac{1}{np}\|M_1-M_2\|_F^2$ and define 
	$$\tilde{M} = \arg\min_{M' \in \mathcal{B}^0} d(M', \widehat{M})\in \mathcal{B}^0.$$
	Let $M$ be any matrix in the packing set $\mathcal{B}^0$. If $d(M, \widehat{M}) < \delta_{N,n,p}$, then $\max\left\{d(M, \widehat{M}), d(\tilde{M}, \widehat{M})\right\} = d(M, \widehat{M}) < \delta_{N,n,p} \le \delta_0 \equiv C_{2,L,K}  \gamma^2  4^{-(L+2K)/K} $.
	Then, by the triangle inequality, we have $d(M, \widehat{M}) + d(\tilde{M}, \widehat{M}) \ge d(M, \tilde{M}) \ge 2\delta_{N,n,p}$ when $M\neq \tilde{M}$. 
	This implies that $d(M, \widehat{M}) \ge \delta_{N,n,p}$, which contradicts $d(M, \widehat{M}) < \delta_{N,n,p}$. Therefore, if $M \neq \tilde{M}$, we must have $d(M, \widehat{M} )\ge \delta_{N,n,p}$. So, it follows that
	\begin{equation}\label{eq::minimax}
		\begin{aligned}
			\inf_{\widehat M}\sup_{M\in \mathcal{M}(L,\gamma, K)}\mathbb{P}\left\{ d(M, \widehat{M}) \ge \delta_{N,n,p}\right\} 
			&\ge \inf_{\widehat M}\sup_{M\in \mathcal{B}^0}\mathbb{P}\left\{d(M, \widehat{M}) \ge \delta_{N,n,p}\right\}\\
			&  =  \inf_{\widehat M}\sup_{M\in \mathcal{B}^0}\mathbb{P}\left\{ M \neq \tilde{M} \right\}\\
			& \ge  \inf_{\widehat{M}} \mathbb{P}(M \neq \tilde{M})
		\end{aligned}
	\end{equation}
	where $M$ is uniformly distributed over the $2\delta_{N,n,p}$-packing set $\mathcal{B}^0$ with $|\mathcal{B}^0| \ge 2^{\lceil c_0 \left(\frac{n\vee p}{N}\right)^{\frac{-K}{2L+K}}\rceil \times p/8} + 1$ as in Lemma~\ref{lem::packing_set}. this has reduced our problem essentially to a testing problem.
	
	We now use this to obtain a lower bound, by considering KL-divergence here. By Lemma~\ref{lem::packing_set}(iii), Fano's inequality \citep{cover2012elements}, or \citep[Proposition 15.12]{wainwright2019high}, and the convexity of the Kullback–Leibler divergence \citep[(15.34)]{wainwright2019high}, 
	\begin{equation}\label{eq::prob_bound0}
		\begin{aligned}
			\mathbb{P}(M \neq \tilde{M})  
			&\ge 1-\frac{\frac{1}{|\mathcal{B}^0|^2}\sum_{M_s, M_{s'}\in\mathcal{B}^0}  K(\mathbb{P}_s||\mathbb{P}_{s'})+ \log 2}{\log |\mathcal{B}^0|}\\
			\underrightarrow{[\text{Lemma}~\ref{lem::packing_set}]}  		 
			& \ge 1- \frac{ \frac{C_{1,L,K} \gamma^2}{2\sigma^2} c_0^{-\frac{2L}{K}}  N \left(\frac{n\vee p}{N}\right)^{\frac{2L}{2L+K}} + \log 2}{\lceil c_0 \left(\frac{n\vee p}{N}\right)^{\frac{-K}{2L+K}}\rceil p\log 2} \\
			\underrightarrow{[bp\ge 8]}& \ge \frac{7}{8} - \frac{C_{1,L,K} \gamma^2 c_0^{-\frac{2L+K}{K}} (n\vee p) }{2(\log 2)\sigma^2 p}.
		\end{aligned}
	\end{equation}

	Consider $n = \kappa p$ for some $\kappa>0$. Let
	\begin{equation}\label{eq::c_0}
		c_0= \left(\frac{4\max(\kappa,1)\gamma^2 C_{1,L,K}}{3\log2\sigma^2}\right)^{\frac{K}{2L+K}}.
	\end{equation}
	Then,
	\begin{equation}\label{eq::prob_bound2}.
		\begin{aligned}
			\mathbb{P}(M \neq \tilde{M}) 
			& \ge 7/8- \frac{\gamma^2 C_{1,L,K} \max(\kappa,1) c_0^{-\frac{2L+K}{K}}}{2(\log 2)\sigma^2 } = 7/8-3/8=1/2
		\end{aligned}
	\end{equation}

	Thus, it follows from \eqref{eq::minimax} and \eqref{eq::prob_bound2} that
	\begin{equation}\label{eq::supp_lower_bound}
		\inf_{\widehat M}\sup_{M\in \mathcal{M}(L,\gamma, K)}\mathbb{P}\left\{\frac{1}{np}\|\widehat M-M\|_F^2 
		\ge 
		A \left(\frac{n\vee p}{N}\right)^{\frac{2L}{2L+K}} \right\} \ge  1/2.
	\end{equation}
	where $A= \frac{C_{2,L,K}  \gamma^2}{16}  (2c_0)^{-2L/K}$. With the selection of $c_0$ in \eqref{eq::c_0}, $A$ depends on $L,K,\gamma,\kappa,\sigma^2$.
	
	Thus, Theorem~\ref{thm::lower_bound} is proved.

\end{proof}

\bibliographystyle{chicago}      
\bibliography{bibfile}   

\end{document}